
\documentclass[10pt,reqno]{amsart}
\usepackage{graphicx}
\usepackage{tabularx}
\usepackage{amsfonts}
\usepackage{amssymb}
\usepackage{amsmath}
\usepackage{amsxtra}
\usepackage{latexsym}
\usepackage{epstopdf}
\usepackage{mathrsfs}
\usepackage{esint}
\usepackage{mathrsfs,galois,booktabs,ctable,threeparttable,tabularx,algorithm,algorithmic}
\usepackage{enumitem}
\usepackage{multirow,booktabs}
\usepackage{float}
\usepackage{graphicx}
\usepackage{subfigure} 
\usepackage{setspace}
\usepackage{hyperref}
\graphicspath{ {./figures/} }
\newtheorem{theorem}{Theorem}[section]
\newtheorem{lemma}[theorem]{Lemma}

\newtheorem{proposition}[theorem]{Proposition}

\theoremstyle{definition}

\newtheorem{example}{Example}[section]

\theoremstyle{remark}
\newtheorem{remark}{Remark}[section]

\numberwithin{equation}{section}

\topmargin  -1pt

\begin{document}

\title[]
{}

\title[SVRG for linear ill-posed problems]
{Stochastic variance reduced gradient method for linear ill-posed inverse problems}

\author{Qinian Jin}
\address{Mathematical Sciences Institute, Australian National
University, Canberra, ACT 2601, Australia}
\email{qinian.jin@anu.edu.au} \curraddr{}

\author{Liuhong Chen}
\address{School of Mathematics and Statistics, Huazhong University of Science and Technology,
Wuhan 430074, China}
\email{liuhong$_{-}$c@hust.edu.cn}

%\subjclass[2010]{90C31, 90C30, 90C25}

%\date{May 30, 2009}

%\keywords{Kurdyka-{\L}ojasiewicz condition, growth error bound, o-minimal structure, definable functions}

\begin{abstract}
In this paper we apply the stochastic variance reduced gradient (SVRG) method, which is a popular variance reduction method in optimization for accelerating the stochastic gradient method, to solve large scale linear ill-posed systems in Hilbert spaces. Under {\it a priori} choices of stopping indices, we derive a convergence rate result when the sought solution satisfies a benchmark source condition and establish a convergence result without using any source condition. To terminate the method in an {\it a posteriori} manner, we consider the discrepancy principle and show that it terminates the method in finite many iteration steps almost surely. Various numerical results are reported to test the performance of the method. 
\end{abstract}

% argmax ºÍ argmin

\def\d{\delta}
\def\E{\mathbb{E}}
\def\P{\mathbb{P}}
\def\X{\mathcal{X}}
\def\A{\mathcal{A}}
\def\Y{\mathcal{Y}}
\def\l{\langle}
\def\r{\rangle}
\def\by{\overline{y}^{(n)}}
\def\bY{\overline{y}_n}
\def\la{\lambda}
\def\EE{{\mathbb E}}
\def\RR{{\mathbb R}}
\def\a{\alpha}
\def\l{\langle}
\def\r{\rangle}
\def\p{\partial}
\def\ep{\varepsilon}
\def\O{\mathcal O}
\def\M{\mathcal M}
\def\F{{\mathcal F}}
\def\PP{{\mathbb P}}
\def\prox{\mbox{prox}}

%\keywords{Inverse problems,  Tikhonov regularization, global minimization, parameter choice rules}
%
%\subjclass[2010]{65J20, 47J06, 47A52, 49J40}
%

%\date{\today}

\maketitle

\section{\bf Introduction}

Consider ill-posed inverse problems governed by the linear system  
\begin{align}\label{SVRG.1}
A_i x = y_i, \quad i = 1, \cdots, N,
\end{align}
where, for each $i = 1, \cdots, N$, $A_i : X \to Y_i$ is a bounded linear operator from 
a fixed Hilbert space $X$ to a Hilbert space $Y_i$. Here, ill-posedness means the solution of (\ref{SVRG.1}) does not depend continuously on the data. Such problems
arise in a broad range of applications including various tomography imaging and inverse 
problems with discrete data (\cite{BDP1985,N2001}). Let $Y:= Y_1 \times \cdots \times Y_N$ 
be the product space of $Y_1, \cdots, Y_N$ with the natural inner product inherited 
from those of $Y_i$. Let $A: X \to Y$ be defined by 
$$
A x := (A_1 x, \cdots, A_N x), \quad \forall x \in X. 
$$
Then $A$ is a bounded linear operator and (\ref{SVRG.1}) can be written as 
$
A x = y
$
with $y:= (y_1, \cdots, y_N)\in Y$. Note that the adjoint $A^*: Y \to X$ of $A$ is given by 
$$
A^* z = \sum_{i=1}^N A_i^* z_i, \quad \forall z = (z_1, \cdots, z_N) \in Y,
$$
where $A_i^*: Y_i \to X$ denotes the adjoint of $A_i$ for each $i$. 
In what follows we always assume that (\ref{SVRG.1}) has a solution, i.e. $y \in \mbox{Ran}(A)$, 
the range of $A$. By taking an initial guess $x_0\in X$. we aim at finding a solution $x^\dag$ of 
(\ref{SVRG.1}) such that 
$$
\|x^\dag - x_0\| = \min\{\|x - x_0\|: A_i x = y_i, \, i = 1, \cdots, N\}.
$$
It is easy to see that this solution $x^\dag$ exists and is unique; we will call $x^\dag$ the 
$x_0$-minimal norm solution of (\ref{SVRG.1}). It is known that $x^\dag$ is the $x_0$-minimal norm 
solution of (\ref{SVRG.1}) if and only if $A x^\dag = y$ and $x^\dag - x_0 \in \mbox{Null}(A)^\perp$, 
where $\mbox{Null}(A)$ denotes the null space of $A$. i.e. $\mbox{Null}(A):=\{x\in X: A x =0\}$. 

In practical applications, data are usually acquired by measurements. Therefore, 
instead of the exact data $y = (y_1, \cdots, y_N)$, we have only noisy data 
$y^\d:= (y_1^\d, \cdots, y_N^\d)$ satisfying 
\begin{align}\label{noise}
\|y^\d - y\| := \left(\sum_{i=1}^N \|y_i^\d - y_i\|^2\right)^{1/2} \le \d, 
\end{align}
where $\d>0$ denotes the noise level. It is therefore important to develop algorithms to compute 
$x^\dag$ approximately using the noisy data $y^\d$. Many regularization methods have been proposed for this purpose in the literature, see \cite{EHN1996}. The most prominent iterative regularization method is the Landweber method
\begin{align}\label{LM}
    x_{n+1}^\d = x_n^\d - \gamma A^*(A x_n^\d - y^\d)
\end{align}
where $x_0^\d := x_0\in X$ is an initial guess and $\gamma>0$ is a step-size. It is known that if $0<\gamma \le 2/\|A\|^2$ then Landweber method, terminated by the discrepancy principle, is an order optimal regularization method. Because of its simple implementation and low complexity per iteration, Landweber method is popular for solving ill-posed inverse problems. 

Note that the implementation of the Landweber method (\ref{LM}) at each iteration step requires to calculate 
$$
A^*(A x_n^\d - y^\d) = \sum_{i=1}^N A_i^*(A_i x_n^\d - y_i^\d).
$$
In case $N$ is huge, this requires a huge amount of computational time because of the calculation of $A_i^*(A_i x_n^\d - y_i^\d)$ for all $i$. To resolve this issue, the stochastic gradient descent method, which is popular for solving large scale optimization problems, has been utilized to solve ill-posed inverse problems of the form (\ref{SVRG.1}) in recent years and the method takes the form 
\begin{align}\label{SGD}
    x_{n+1}^\d = x_n^\d - \gamma_n A_{i_n}^*(A_{i_n} x_n^\d - y_{i_n}^\d),
\end{align}
where $i_n \in \{1, \cdots, N\}$ is selected at random with the uniform distribution and $\gamma_n$ denotes the step-size at the $n$th iteration. This method has been analyzed in \cite{JL2019} under a choice of diminishing step-sizes and in \cite{JLZ2023} under constant step-sizes. However, the presence of stochastic gradient noise can lead the SGD iterates to oscillate dramatically and thus makes it hard to terminate the iteration properly. In order to reduce the oscillations, it is necessary to devise procedures to reduce variance of the noisy gradient. In \cite{JLZ2023} the discrepancy principle is incorporated into the choice of step-sizes leading to significant reduction of oscillations. 

In the context of large scale optimization problems 
\begin{align}\label{fso}
    \min_{x\in X} \left\{f(x):= \frac{1}{N}\sum_{i=1}^N f_i(x)\right\}
\end{align}
of finite-sum structure, where each $f_i$ is convex continuous differentiable, various variance reduction methods have been proposed to accelerate the stochastic gradient method, see \cite{DBL2014,JZ2013,LSB2012,NLST2017,SZ2013,ZMJ2013,ZX2015}. One of the most popular method is the stochastic variance reduced gradient (SVRG) method (\cite{JZ2013,ZMJ2013}) which has received tremendous attention (\cite{AY2016,HAVSKS2015,TMDQ2016,XZ2014,YLDS2021}). The SVRG method is a stochastic algorithm to solve the minimization problem (\ref{fso}) iteratively. It starts from an initial guess $x_0$ and an update frequency $m$. When a snapshot point $x_n$ is determined at the $n$th step, SVRG then calculate the full gradient $\nabla f(x_n)$ of $f$ at $x_n$ and perform $m$ steps of SGD to obtain $\{x_{n,k}: k=0, \cdots,m\}$ with $x_{n, 0} = x_n$ using the unbiased gradients
$$
g_{n,k} = \nabla f_{i_{n,k}}(x_{n,k}) - \nabla f_{i_{n,k}}(x_n) + \nabla f(x_n),
$$
where $i_{n,k}\in \{1, \cdots, N\}$ is chosen randomly via the uniform distribution. Namely, 
$$
x_{n,k+1} = x_{n,k} - \gamma g_{n,k}, \quad k = 0, \cdots, m-1
$$
with a constant step size $\gamma$. The next snapshot point $x_{n+1}$ is then defined from $x_{n,k}, k=0, \cdots, m$ in various ways, e.g. $x_{n+1}$ can be defined as the last iterate, a random choice among them, or a weighted iterate average. When SVRG is used to solve (\ref{SVRG.1}) using noisy data $y^\d$, we may consider (\ref{fso}) with $f_i(x) = \frac{1}{2} \|A_i x - y_i^\d\|^2$. Correspondingly
$$
\nabla f(x) = \frac{1}{N} A^*(A x - y^\d), \quad \nabla f_i(x) = A_i^*(A_i x- y_i^\d). 
$$
Thus, by taking the snapshot points to be the last iterates in the SVRG method, it leads to the following Algorithm \ref{alg:SVRG0} for solving linear ill-posed inverse problems which has been considered in \cite{JZZ2022} in finite-dimensions.

\begin{algorithm}
        \caption{SVRG for linear ill-posed problems \cite{JZZ2022}}\label{alg:SVRG0}
	\begin{algorithmic}[0]	
		\STATE \textbf{input:} update frequency $m$, initial guess $x_0$, and step-size $\gamma$. Set $x_0^\d := x_0$.
		
		\STATE {\bf for} $n = 0, 1, \cdots$ {\bf do}  
		
		\STATE \qquad $g_n^\d = \displaystyle{\frac{1}{N} A^* (A x_n^\d - y^\d)}$; \, $x_{n,0}^\d = x_n^\d$; 
		
		\STATE \qquad {\bf for } $k = 0, \cdots, m-1$ {\bf do} 
		
		\STATE \qquad \qquad pick $i_{n,k} \in \{1, \cdots, N\}$ randomly via uniform distribution; 
		
		\STATE \qquad \qquad $g_{n,k}^\d = A_{i_{n,k}}^*A_{i_{n,k}}(x_{n,k}^\d - x_n^\d)+g_n^\d$; 
		
		\STATE \qquad \qquad  $\displaystyle{x_{n,k+1}^\d = x_{n,k}^\d - \gamma g_{n,k}^\d}$; 
		
		\STATE \qquad {\bf end for} 
		
		\STATE \qquad $x_{n+1}^\d = x_{n,m}^\d$; 
		
		\STATE {\bf end} 
	\end{algorithmic}
\end{algorithm}

The SVRG method for large scale optimization problems of the form (\ref{fso}) has been analyzed extensively, and all the established convergence results require either the objective function $f$ to be strongly convex or the error estimates are established in terms of the objective function value. However, these results are not applicable to Algorithm \ref{alg:SVRG0} for ill-posed problems because the corresponding objective function is the residue $f(x) = \frac{1}{2N} \|A x - y^\d\|^2$ which is not strongly convex, and moreover, due to the ill-posedness of the underlying problem, the error estimate on residue does not imply any estimate on the iterates directly. Therefore, new analysis is required for understanding Algorithm \ref{alg:SVRG0} for ill-posed problems. 
Like all the other iterative regularization methods, when Algorithm \ref{alg:SVRG0} is used to solve ill-posed problems, it exhibits the semi-convergence phenomenon, i.e. the iterate tends to the sought solution at the beginning and then leaves away from the sought solution as the iteration proceeds. Thus, properly terminating the iteration is crucial for producing acceptable approximate solutions. 
Based on the bias-variance decomposition, a convergence analysis on Algorithm \ref{alg:SVRG0} has been provided in \cite{JZZ2022} by a delicate spectral theory argument when $A$ has a special structure. It has been proved that, when the sought solution $x^\dag$ satisfies the H\"{o}lder source condition $x^\dag - x_0 = (A^*A)^\nu \omega$ for some $\omega \in X$ and $\nu>0$, an order optimal error bound can be established on $x_{n_\d}^\d$ for an {\it a priori} chosen stopping index $n_\d$. 
However, the convergence analysis in \cite{JZZ2022} has the following drawbacks. 

\begin{enumerate}[leftmargin = 0.8cm]
     \item[$\bullet$] The analysis in \cite{JZZ2022} is carried out under the H\"{o}lder type source conditions on the sought solution. This type of source conditions might be too strong to be satisfied in applications. Is it possible to establish a convergence result without using source conditions?

     \item[$\bullet$] The analysis in \cite{JZZ2022} requires $m$ to be large and the step-size $\gamma>0$ to be sufficiently small, see \cite[Theorem 2.1]{JZZ2022}; no explicit formula is provided for choosing $\gamma$. The numerical simulations in \cite{JZZ2022} use $\gamma = O(1/m)$, When $m$ is chosen as $m = \rho N$ for some constant $\rho>0$ and $N$ is huge, then $\gamma$ can be very small. Using small step-sizes can slow down the convergence of the method and thus huge amount of computational time is required. Can we develop a convergence analysis which allows using larger step-sizes?

      \item[$\bullet$]  The most serious drawback is that the arguments in \cite{JZZ2022} require $A$ to have the decomposition structure $A = \Sigma V^t$, where $\Sigma$ is diagonal with nonnegative entries and $V$ is column orthonormal. Unfortunately, the forward operators arising in linear ill-posed problems seldom have this structure in general. One may argue that, by performing the singular value decomposition $A = U \Sigma V^t$, one may transform the equation $A x = y$ equivalently to $\Sigma V^t x = U^t y$ and then apply the convergence results in \cite{JZZ2022}. However, finding the singular value decomposition requires a huge amount of computational time or even is impossible if the problem size is huge. Therefore, the understanding on SVRG for linear ill-posed inverse problems is still largely open. It is desirable to have a convergence analysis without relying on the decomposition structure of $A$ as assumed in \cite{JZZ2022}. 
\end{enumerate}

In this paper we will provide a completely different novel analysis on SVRG for solving linear ill-posed problems and remove all the above mentioned drawbacks. Note that the definition of $x_{n,1}^\d$
from $x_n^\d$ in Algorithm \ref{alg:SVRG0} is a one-step of Landweber method and does not involve any stochasticity because $g_{n,0}^\d = g_n^\d$ although a random index $i_{n,0}$ is selected. This sharply contrasts to the update of $x_{n,k+1}^\d$ for $1 \le k \le m-1$ which depends heavily on the randomly selected index $i_{n, k}$. Therefore, it seems natural to modify Algorithm \ref{alg:SVRG0} by splitting these two parts and introducing two step-size parameters $\gamma_0$ and $\gamma_1$. This leads to the following Algorithm \ref{alg:SVRG} we will consider in this paper. 

\begin{algorithm}
        \caption{SVRG for linear ill-posed problems}\label{alg:SVRG}
	\begin{algorithmic}[0]	
		\STATE \textbf{input:} update frequency $m$, initial guess $x_0$, step-sizes $\gamma_0$ and $\gamma_1$. Set $x_0^\d := x_0$. 
		
		\STATE {\bf for} $n = 0, 1, \cdots$ {\bf do}  
		
		\STATE \qquad $g_n^\d = A^* (A x_n^\d - y^\d)$; 
        \STATE \qquad $x_{n,0}^\d = x_n^\d - \gamma_0 g_n^\d$; 
		
		\STATE \qquad {\bf for } $k = 0, \cdots, m-1$ {\bf do} 
		
		\STATE \qquad \qquad pick $i_{n,k} \in \{1, \cdots, N\}$ randomly via uniform distribution; 
		
		\STATE \qquad \qquad $\displaystyle{g_{n,k}^\d = A_{i_{n,k}}^*A_{i_{n,k}} (x_{n,k}^\d - x_n^\d) + \frac{1}{N} g_n^\d}$; 
		
		\STATE \qquad \qquad  $\displaystyle{x_{n,k+1}^\d = x_{n,k}^\d - \gamma_1 g_{n,k}^\d}$; 
		
		\STATE \qquad {\bf end for} 
		
		\STATE \qquad $x_{n+1}^\d = x_{n,m}^\d$; 
		
		\STATE {\bf end} 
	\end{algorithmic}
\end{algorithm}

Note that, once $x_0\in X$, $m$, $\gamma_0$ and $\gamma_1$ are fixed, the sequence $\{x_n^\d\}$ in Algorithm \ref{alg:SVRG} is completely determined by the sample path $\{i_{n,k}: n \ge 0, k = 0, \cdots, m-1\}$; changing the sample path can result in a different iterative sequence and thus 
$\{x_n^\d\}$ is a random sequence. Therefore we need to perform a stochastic analysis on Algorithm \ref{alg:SVRG}. For each integer $n \ge 0$ and $k \in \{0, \cdots, m-1\}$, let $\F_{n,k}$ denote the $\sigma$-algebra generated by the random variables $i_{n',k'}$ for $(n',k') \in \{(n',k'): 0\le n'\le n-1, 0\le k'\le m-1\}\cup \{(n, k'): 0\le k'<k\}$. Then $\{\F_{n,k}: n\ge 0 \mbox{ and } k =0, \cdots, m-1\}$ form 
a filtration which is natural to Algorithm \ref{alg:SVRG}. We will also set $\F_n := \F_{n-1, m-1}$ for $n \ge 1$. Let $\EE$ denote the expectation associated with this filtration, see \cite{B2020}. The tower property 
$$
\EE[\EE[\varphi|\F_{n,k}]] = \EE[\varphi] \quad \mbox{ for any random variable } \varphi
$$
will be frequently used. 
Our analysis on Algorithm \ref{alg:SVRG} is based on a variational approach. Without using any source condition on the sought solution $x^\dag$ we show that $\EE[\|x_{n_\d}^\d - x^\dag\|^2] \to 0$ as $\d \to 0$ if the stopping index $n_\d$ is chosen such that $n_\d \to \infty$ and $\d^2 n_\d \to 0$ as $\d \to 0$. When $x^\dag$ satisfies the benchmark source condition $x^\dag - x_0 = A^* \la^\dag$ for some $\la^\dag \in Y$, the convergence rate $\EE[\|x_{n_\d}^\d - x^\dag\|^2] = O(\d)$ holds for the stopping index $n_\d$ chosen by $n_\d \sim \d^{-1}$. Sharply contrast to \cite{JZZ2022}, our results are established for general bounded linear operator $A$, no special structure on $A$ is required. Furthermore, our analysis allows using large step sizes. In particular, our convergence results hold for 
$$
\gamma_0 = \frac{1}{\|A\|^2} \quad \mbox{ and } \quad 
\gamma_1 = \beta \min\left\{\frac{1}{L}, \frac{1}{\|A\|} \sqrt{\frac{N}{2mL}} \right\}
$$
with $0<\beta <1$, where 
\begin{align}\label{L}
 L := \max\{\|A_i\|: i=1, \cdots, N\}. 
\end{align}
In case $m = N$, both $\gamma_0$ and $\gamma_1$ are constants independent of $m$, $N$ and can be sufficiently larger than those required in \cite{JZZ2022}. Finally we also consider terminating the iteration in Algorithm \ref{alg:SVRG} by {\it a posteriori} stopping rules and demonstrate that the discrepancy principle can terminate the iterations in finite many steps almost surely. This suggests that we may incorporate the discrepancy principle into Algorithm \ref{alg:SVRG} to turn it into a practical implementable method for solving linear ill-posed inverse problems. 

This paper is organized as follows. In Section \ref{sect2} we first prove a stability result concerning Algorithm \ref{alg:SVRG}. In Section \ref{sect3} we then derive a convergence rate result when the sought solution satisfies a benchmark source condition. Based on results from Sections \ref{sect2} and \ref{sect3}, in Section \ref{sect4} we use a density argument to prove a convergence result without using any source condition. In Section \ref{sect5} we consider incorporating the discrepancy principle into Algorithm \ref{alg:SVRG} and demonstrate that the method can be terminated in finite many steps almost surely. Finally, in Section \ref{sect6} we provide various numerical results to test the performance of Algorithm \ref{alg:SVRG}. 

\section{\bf Stability estimate}\label{sect2}

Let $\{x_n^\d\}$ be defined by Algorithm \ref{alg:SVRG}. We first consider for each fixed $n$ the behavior of $x_n^\d$ as $\d \to 0$. To this end, we consider the counterpart of Algorithm \ref{alg:SVRG} with noisy data $y_i^\d$ replaced by the exact data $y_i$ and drop the superscript $\d$ in all quantities; for instance, we will write $x_n^\d$ as $x_n$, $x_{n,k}^\d$ as $x_{n,k}$ and so on. According to the definition of $x_n^\d$ and $x_n$, one can easily see that, for any fixed integer $n \ge 0$, there holds $\|x_n^\d - x_n\| \to 0$ as $\d \to 0$ along any sample path and thus $\EE[\|x_n^\d - x_n\|^2] \to 0$ as $\d \to 0$. The following result gives 
a quantitative estimate of this kind of stability.

\begin{lemma}\label{SVRG:lem6}
Let $L$ be defined by (\ref{L}). If $\gamma_0>0$ and $\gamma_1>0$ are chosen such that 
\begin{align}\label{SVRG.5}
1-\gamma_1 L >0 \quad \mbox{and} \quad 2\gamma_0 - \gamma_0^2 \|A\|^2 - \frac{2m \gamma_1^2 L}{N} > 0,
\end{align}
then  
\begin{align*}
\EE\left[\|x_n^\d - x_n\|^2\right] \le C_0 n \d^2 
\end{align*}
for all integers $n \ge 0$, where 
$$
C_0 := \frac{\gamma_0^2}{2\gamma_0 - \gamma_0^2 \|A\|^2 - 2m \gamma_1^2 L/N}
+ \frac{m \gamma_1^2}{2N(1-\gamma_1 L)}. 
$$
\end{lemma}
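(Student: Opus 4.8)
The plan is to run Algorithm~\ref{alg:SVRG} twice along the \emph{same} sample path $\{i_{n,k}\}$, once with the noisy data $y^\d$ and once with the exact data $y$, and to track the difference. Set $e_n := x_n^\d - x_n$, $e_{n,k} := x_{n,k}^\d - x_{n,k}$ and $w := y^\d - y$, so that $\|w\|\le\d$ by (\ref{noise}) and $e_0 = 0$ because $x_0^\d = x_0$. Subtracting the two instances of the algorithm gives the recursions
\begin{align*}
e_{n,0} &= (I - \gamma_0 A^*A)e_n + \gamma_0 A^* w, \\
e_{n,k+1} &= e_{n,k} - \gamma_1 d_{n,k}, \qquad
d_{n,k} := A_{i_{n,k}}^* A_{i_{n,k}}(e_{n,k} - e_n) + \tfrac1N\big(A^*A e_n - A^* w\big),
\end{align*}
together with $e_{n+1} = e_{n,m}$. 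I would aim to establish the one-step energy inequality $\EE[\|e_{n+1}\|^2] \le \EE[\|e_n\|^2] + C_0\d^2$ and then iterate it from $e_0 = 0$ to obtain $\EE[\|e_n\|^2]\le C_0 n\d^2$.

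For the inner loop I would condition on $\F_{n,k}$. Since $i_{n,k}$ is uniform, $\EE[A_{i_{n,k}}^*A_{i_{n,k}}\,|\,\F_{n,k}] = \tfrac1N A^*A$ and hence $\EE[d_{n,k}\,|\,\F_{n,k}] = \tfrac1N(A^*A e_{n,k} - A^*w)$. Expanding $\|e_{n,k+1}\|^2 = \|e_{n,k}\|^2 - 2\gamma_1\langle e_{n,k}, d_{n,k}\rangle + \gamma_1^2\|d_{n,k}\|^2$ and applying $\EE[\cdot\,|\,\F_{n,k}]$ produces a negative drift $-\tfrac{2\gamma_1}{N}\|A e_{n,k}\|^2$, a noise cross-term $\tfrac{2\gamma_1}{N}\langle A e_{n,k}, w\rangle$, and the variance $\gamma_1^2\EE[\|d_{n,k}\|^2\,|\,\F_{n,k}]$. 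Controlling the variance is the heart of the argument: using the uniform bound $\|A_i\|\le L$ one estimates $\EE[\|d_{n,k}\|^2\,|\,\F_{n,k}]$ by a combination of $\|Ae_{n,k}\|^2$, $\|Ae_n\|^2$ and $\d^2$ with coefficients governed by $L$ and $1/N$ (an SVRG-type variance bound exploiting that $A_{i_{n,k}}^*A_{i_{n,k}}(e_{n,k}-e_n)$ has conditional mean $\tfrac1N A^*A(e_{n,k}-e_n)$). Taking total expectations and summing over $k = 0,\dots,m-1$ telescopes the $\EE[\|e_{n,k}\|^2]$ terms and bounds $\EE[\|e_{n+1}\|^2]$ by $\EE[\|e_{n,0}\|^2]$, a weighted sum of the $\EE[\|Ae_{n,k}\|^2]$, a multiple of $\EE[\|Ae_n\|^2]$, and $\d^2$.

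For the outer step I would expand $\EE[\|e_{n,0}\|^2]$ from its recursion, using $\|A^*A e_n\|^2 \le \|A\|^2\|Ae_n\|^2$ and $\|A^*w\|\le\|A\|\d$, to get $\EE[\|e_{n,0}\|^2] \le \EE[\|e_n\|^2] - (2\gamma_0 - \gamma_0^2\|A\|^2)\EE[\|Ae_n\|^2] + (\text{noise in }\gamma_0)$. Combining the inner and outer estimates, the first condition in (\ref{SVRG.5}), $1-\gamma_1 L>0$, is precisely what renders the net coefficient of each $\|Ae_{n,k}\|^2$ nonpositive, so these terms may be dropped; the second condition, $2\gamma_0 - \gamma_0^2\|A\|^2 - 2m\gamma_1^2 L/N > 0$, is precisely what makes the combined coefficient of $\EE[\|Ae_n\|^2]$ nonpositive. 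The surviving noise terms --- those carrying $\gamma_0$ from the outer step and the $m$ inner terms carrying $\gamma_1$ --- would be absorbed by Cauchy--Schwarz and Young's inequality against the residual drift, the two resulting $\d^2$ contributions accounting for the two summands in $C_0$. This yields the one-step inequality, and induction finishes the proof.

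I expect the main obstacle to be the joint bookkeeping of the inner and outer contributions: the SVRG variance must be partitioned so that one part is absorbed by the inner drift (controlled by $1-\gamma_1 L$) and another part is charged against $\|Ae_n\|^2$ (controlled by the second condition in (\ref{SVRG.5})), while the Young splittings of the noise cross-terms must be tuned so that the accumulated $\d^2$ coefficient closes to exactly $C_0$. Obtaining the sharp constant $C_0$, rather than merely some bound of the form $O(n\d^2)$, is where the delicacy lies.
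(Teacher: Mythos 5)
Your proposal follows essentially the same route as the paper's proof: the same difference recursions for $x_n^\d-x_n$ and $x_{n,k}^\d-x_{n,k}$ along a common sample path, the same conditional drift/variance decomposition with the SVRG variance bound via $\|a+b\|^2\le 2(\|a\|^2+\|b\|^2)$ and $\|A_i\|\le L$, the same Young splittings producing the one-step inequality $\EE[\|e_{n+1}\|^2]\le\EE[\|e_n\|^2]+C_0\d^2$, and the same induction from $e_0=0$. The only cosmetic difference is that the paper carries the perturbed residual $A u_{n,k}^\d - y^\d + y$ (i.e.\ $Ae_{n,k}-w$) as the basic quantity instead of separating $Ae_{n,k}$ and $w$, which makes the bookkeeping close to exactly $C_0$ without further tuning.
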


\begin{proof}
We use an induction argument. The result is trivial for $n=0$ because $x_0^\d = x_0$. Now we 
assume that $\EE\left[\|x_n^\d - x_n\|^2\right] \le C_0 n \d^2$ for some integer $n \ge 0$ and show the result 
for $n+1$. To see this, along any sample path we set 
$$
u_n^\d:= x_n^\d - x_n \quad \mbox{ and } \quad u_{n, k}^\d := x_{n,k}^\d - x_{n,k}. 
$$
Note that 
$$
u_{n,0}^\d  = u_n^\d - \gamma_0 A^* (A u_n^\d - y^\d + y). 
$$
Thus
\begin{align}\label{stab}
\|u_{n,0}^\d\|^2 - \|u_n^\d\|^2 
& = - 2\gamma_0 \l u_n^\d, A^*(A u_n^\d - y^\d + y)\r 
+ \gamma_0^2 \|A^*(A u_n^\d - y^\d +y)\|^2 \displaybreak[0]\nonumber \\
& = - 2\gamma_0 \l A u_n^\d, A u_n^\d - y^\d + y\r 
+ \gamma_0^2 \|A^*(A u_n^\d - y^\d +y)\|^2 \displaybreak[0]\nonumber \\
& \le - 2\gamma_0 \|A u_n^\d - y^\d + y\|^2 
- 2\gamma_0 \l y^\d - y, A u_n^\d - y^\d + y\r \nonumber \\
& \quad + \gamma_0^2 \|A\|^2 \|A u_n^\d - y^\d +y\|^2 \displaybreak[0]\nonumber \\
& \le - \left(2\gamma_0 - \gamma_0^2\|A\|^2\right) \|A u_n^\d - y^\d + y\|^2 \nonumber \\
& \quad \, + 2\gamma_0 \d \|A u_n^\d - y^\d + y\|.
\end{align}
Next, by noting that 
$$
u_{n,k+1}^\d = u_{n,k}^\d - \gamma_1 \left(A_{i_{n,k}}^* A_{i_{n,k}}(u_{n,k}^\d - u_n^\d) 
+ \frac{1}{N} A^*(A u_n^\d - y^\d + y) \right), 
$$
we therefore have  
\begin{align*}
\|u_{n,k+1}^\d\|^2 - \|u_{n,k}^\d\|^2 
& = - 2\gamma_1 \left\l u_{n,k}^\d, A_{i_{n,k}}^* A_{i_{n,k}}(u_{n,k}^\d - u_n^\d) 
+ \frac{1}{N} A^*(A u_n^\d - y^\d + y) \right\r \\
& \quad \, + \gamma_1^2 \left\|A_{i_{n,k}}^* A_{i_{n,k}}(u_{n,k}^\d - u_n^\d) 
+ \frac{1}{N} A^*(A u_n^\d - y^\d + y) \right\|^2.
\end{align*}
Consequently, by taking the expectation conditioned on $\F_{n,k}$, we can obtain 
\begin{align*}
& \EE\left[\|u_{n,k+1}^\d\|^2|\F_{n,k}\right] - \|u_{n,k}^\d\|^2 \\
& = - \frac{2\gamma_1}{N} \sum_{i=1}^N \left\l u_{n,k}^\d, A_i ^* A_i(u_{n,k}^\d - u_n^\d) 
+ \frac{1}{N} A^*(A u_n^\d - y^\d + y) \right\r \displaybreak[0] \\
& \quad \, + \frac{\gamma_1^2}{N} \sum_{i=1}^N \left\|A_i^* A_i(u_{n,k}^\d - u_n^\d) 
+ \frac{1}{N} A^*(A u_n^\d - y^\d + y) \right\|^2 \displaybreak[0] \\
& = -\frac{2\gamma_1}{N}\l u_{n,k}^\d, A^*(A u_{n,k}^\d - y^\d + y)\r \\
& \quad \, + \frac{\gamma_1^2}{N} \sum_{i=1}^N \left\|A_i^* A_i(u_{n,k}^\d - u_n^\d) 
+ \frac{1}{N} A^*(A u_n^\d - y^\d + y) \right\|^2.
\end{align*}
By virtue of the inequality $\|a+b\|^2 \le 2 (\|a\|^2 + \|b\|^2)$ and the polarization identity, we 
have 
\begin{align*}
& \EE\left[\|u_{n,k+1}^\d\|^2|\F_{n,k}\right] - \|u_{n,k}^\d\|^2 \\
& \le -\frac{2\gamma_1}{N}\l A u_{n,k}^\d, A u_{n,k}^\d - y^\d + y\r 
+ \frac{2\gamma_1^2}{N} \sum_{i=1}^N \|A_i^*(A_i u_{n,k}^\d - y_i^\d + y_i)\|^2 \displaybreak[0] \\
& \quad \, + \frac{2\gamma_1^2}{N} \sum_{i=1}^N \left\|A_i^*(A_i u_n^\d - y_i^\d + y_i)
- \frac{1}{N} A^*(A u_n^\d -y^\d + y)\right\|^2 \displaybreak[0] \\
& = - \frac{2\gamma_1}{N} \|A u_{n,k}^\d - y^\d + y\|^2
- \frac{2\gamma_1}{N} \l y^\d - y, A u_{n,k}^\d - y^\d + y\r \displaybreak[0] \\
& \quad \, + \frac{2\gamma_1^2}{N} \sum_{i=1}^N \|A_i^*(A_i u_{n, k}^\d - y_i^\d + y_i)\|^2 
+ \frac{2\gamma_1^2}{N} \sum_{i=1}^N \|A_i^*(A_i u_n^\d - y_i^\d + y_i)\|^2 \displaybreak[0] \\
& \quad \, - \frac{4 \gamma_1^2}{N^2} \sum_{i=1}^N \l A_i^*(A_i u_n^\d - y_i^\d + y_i), A^*(A u_n^\d - y^\d + y)\r \displaybreak[0] \\
& \quad \, + \frac{2\gamma_1^2}{N^2} \|A^*(A u_n^\d - y^\d + y)\|^2.
\end{align*}
By using the Cauchy-Schwarz inequality, $\|y^\d - y\| \le \d$ and the definition of $L$, we obtain 
\begin{align*}
& \EE\left[\|u_{n,k+1}^\d\|^2|\F_{n,k}\right] - \|u_{n,k}^\d\|^2 \\
& \le  - \frac{2\gamma_1}{N} \|A u_{n,k}^\d - y^\d + y\|^2
+ \frac{2\gamma_1}{N} \d\|A u_{n,k}^\d - y^\d + y\| \displaybreak[0] \\
& \quad \, + \frac{2\gamma_1^2}{N} \sum_{i=1}^N \|A_i\|^2 \|A_i u_{n, k}^\d - y_i^\d + y_i\|^2 
+ \frac{2\gamma_1^2}{N} \sum_{i=1}^N \|A_i\|^2 \|A_i u_n^\d - y_i^\d + y_i\|^2 \displaybreak[0] \\
& \quad \, - \frac{2\gamma_1^2}{N^2} \|A^*(A u_n^\d - y^\d + y)\|^2 \displaybreak[0] \\
%& \le  - \frac{2\gamma_1}{N} \|A u_{n,k}^\d - y^\d + y\|^2
%+ \frac{2\gamma_1 }{N} \d\|A u_{n,k}^\d - y^\d + y\| \displaybreak[0] \\
%& \quad \, + \frac{2\gamma_1^2 L}{N} \|A u_{n, k}^\d - y^\d + y\|^2 
%+ \frac{2\gamma_1^2 L}{N} \|A u_n^\d - y^\d + y\|^2 \displaybreak[0] \\
& \le  - \frac{2\gamma_1 (1-\gamma_1 L)}{N} \|A u_{n,k}^\d - y^\d + y\|^2 
+ \frac{2\gamma_1}{N} \d\|A u_{n,k}^\d - y^\d + y\| \\
& \quad \, + \frac{2\gamma_1^2 L}{N} \|A u_n^\d - y^\d + y\|^2. 
\end{align*}
In view of the inequality 
$$
\frac{2\gamma_1}{N} \d \|A u_{n, k}^\d - y^\d +y\| 
\le \frac{2\gamma_1 (1-\gamma_1 L)}{N} \|A u_{n,k}^\d - y^\d + y\|^2 
+ \frac{\gamma_1}{2N (1-\gamma_1 L)} \d^2, 
$$
we further obtain 
\begin{align*}
& \EE\left[\|u_{n,k+1}^\d\|^2|\F_{n,k}\right] - \|u_{n,k}^\d\|^2 
\le  \frac{\gamma_1 \d^2}{2N (1-\gamma_1 L)} + \frac{2\gamma_1^2 L}{N} \|A u_n^\d - y^\d + y\|^2.
\end{align*}
Consequently 
\begin{align*}
& \EE\left[\|u_{n,k+1}^\d\|^2|\F_n\right] - \EE\left[\|u_{n,k}^\d\|^2|\F_n\right] 
\le  \frac{\gamma_1 \d^2}{2N (1-\gamma_1 L)} + \frac{2\gamma_1^2 L}{N} \|A u_n^\d - y^\d + y\|^2
\end{align*}
and by summing over $k$ from $k=0$ to $k = m-1$ we obtain 
\begin{align*}
& \EE\left[\|u_{n+1}^\d\|^2|\F_n\right] - \|u_{n,0}^\d\|^2 
\le  \frac{m\gamma_1 \d^2}{2N (1-\gamma_1 L)} + \frac{2m\gamma_1^2 L}{N} \|A u_n^\d - y^\d + y\|^2.
\end{align*}
Combining this with (\ref{stab}) shows that 
\begin{align*}
\EE\left[\|u_{n+1}^\d\|^2|\F_n\right] - \|u_n^\d\|^2 
& \le  - \left(2\gamma_0 - \gamma_0^2 \|A\|^2 - \frac{2m \gamma_1^2 L}{N}\right) 
\|A u_n^\d - y^\d + y\|^2 \\
& \quad \, + 2\gamma_0 \d \|A u_n^\d - y^\d + y\| 
+ \frac{m\gamma_1 \d^2}{2N (1-\gamma_1 L)}.
\end{align*}
By using the inequality 
\begin{align*}
2\gamma_0 \d \|A u_n^\d - y^\d + y\|
& \le \left(2\gamma_0 - \gamma_0^2 \|A\|^2 - \frac{2m \gamma_1^2 L}{N}\right) 
\|A u_n^\d - y^\d + y\|^2 \\
&\quad \, + \frac{\gamma_0^2 \d^2}{2\gamma_0 - \gamma_0^2 \|A\|^2 - 2m \gamma_1^2 L/N}
\end{align*}
we can conclude 
\begin{align*}
& \EE\left[\|u_{n+1}^\d\|^2|\F_n\right] - \|u_n^\d\|^2 \\
& \le \left(\frac{\gamma_0^2}{2\gamma_0 - \gamma_0^2 \|A\|^2 - 2m \gamma_1^2 L/N} 
+ \frac{m\gamma_1}{2N (1-\gamma_1 L)}\right) \d^2 = C_0 \d^2.
\end{align*}
By taking the full expectation and using the induction hypothesis, we obtain 
$$
\EE\left[\|u_{n+1}^\d\|^2\right] \le \EE\left[\|u_n^\d\|^2\right] + C_0\d^2 \le C_0(n+1)\d^2.
$$
This completes the proof. 
\end{proof}

\begin{remark}\label{remark_stepsize}
One can easily see that (\ref{SVRG.5}) holds if we choose $\gamma_0$ and $\gamma_1$ such that 
\begin{equation}
    \gamma_0 = \frac{\a}{\|A\|^2} \quad \mbox{ and } \quad 
    \gamma_1 = \beta \min\left\{\frac{1}{L}, \frac{1}{\|A\|} \sqrt{\frac{(2-\a)\a N}{2m L}}\right\}
    \label{step-size}
\end{equation}
for some $0<\a<2$ and $0<\beta<1$. 
\end{remark}

\section{\bf Rate of convergence}\label{sect3}

In this section we will derive the error estimate on $\EE[\|x_n^\d - x^\dag\|^2]$ under the 
benchmark source condition
\begin{align}\label{sc}
x^\dag - x_0 = A^* \la^\dag \quad \mbox{ for some } \la^\dag \in Y. 
\end{align}
According to Lemma \ref{SVRG:lem6}, we need only to estimate $\EE[|\|x_n - x^\dag\|^2]$. 
To this end, we start proving the following result. 

\begin{lemma}\label{SVRG.lem1}
Consider Algorithm \ref{alg:SVRG}, Assume $\gamma_0>0$ and $\gamma_1>0$ satisfy (\ref{SVRG.5}).
Then there holds  
\begin{align*}%\label{SVRG.6}
\EE\left[\|x_{n+1} - x^\dag\|^2 | \F_n\right] - \|x_n - x^\dag\|^2 
& \le -\frac{2 \gamma_1 (1-\gamma_1 L)}{N} \sum_{k=0}^{m-1} \EE\left[\|A x_{n,k} - y\|^2 |\F_n\right] \nonumber \\
& \quad \, - \left(2 \gamma_0 - \gamma_0^2 \|A\|^2 -  \frac{2 m \gamma_1^2 L}{N}\right) \|A x_n - y\|^2
\end{align*}
for all integers $n\ge 0$. 
\end{lemma}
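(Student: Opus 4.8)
The plan is to repeat the computation behind Lemma~\ref{SVRG:lem6} almost verbatim, but now comparing the \emph{exact} iterates $\{x_n\}$ with the solution $x^\dag$ instead of comparing noisy and exact iterates. The only structural fact about $x^\dag$ that is needed is that it solves the exact system, i.e. $A x^\dag = y$ and hence $A_i x^\dag = y_i$ for every $i$; the source condition (\ref{sc}) plays no role in this lemma. Writing $e_n := x_n - x^\dag$ and $e_{n,k} := x_{n,k} - x^\dag$, this identity turns $A e_n$ into $A x_n - y$ and $A_i e_n$ into $A_i x_n - y_i$, which is exactly the substitution one obtains from the stability computation by setting $y^\d = y$ and $\d = 0$. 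The essential difference is that, with exact data, there are no $\d$-cross-terms to absorb, so the negative residual terms are not consumed and instead survive into the final bound precisely in the form stated.

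First I would handle the single outer (Landweber) step $x_{n,0} = x_n - \gamma_0 A^*(A x_n - y)$, for which $e_{n,0} = e_n - \gamma_0 A^* A e_n$. Expanding $\|e_{n,0}\|^2 - \|e_n\|^2$ and using $\langle e_n, A^* A e_n\rangle = \|A x_n - y\|^2$ together with $\|A^* A e_n\|^2 \le \|A\|^2 \|A x_n - y\|^2$ gives the deterministic estimate
\[
\|x_{n,0} - x^\dag\|^2 - \|x_n - x^\dag\|^2 \le -\left(2\gamma_0 - \gamma_0^2\|A\|^2\right)\|A x_n - y\|^2,
\]
which is the $\d=0$ version of (\ref{stab}).

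Next I would analyse one inner iteration, starting from $e_{n,k+1} = e_{n,k} - \gamma_1\big(A_{i_{n,k}}^* A_{i_{n,k}}(e_{n,k}-e_n) + \tfrac{1}{N} A^* A e_n\big)$, expanding $\|e_{n,k+1}\|^2 - \|e_{n,k}\|^2$ and taking $\EE[\,\cdot\,|\,\F_{n,k}]$, which averages $i_{n,k}$ uniformly. The key algebraic point is that $A^* A = \sum_i A_i^* A_i$, so the conditional mean of the search direction equals $\tfrac{1}{N} A^* A e_{n,k}$ and the cross term collapses to $-\tfrac{2\gamma_1}{N}\|A x_{n,k}-y\|^2$. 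For the quadratic term I would reuse the splitting from Lemma~\ref{SVRG:lem6}, writing the $i$-th direction as $A_i^*(A_i x_{n,k}-y_i) - \big(A_i^*(A_i x_n - y_i) - \tfrac{1}{N} A^*(A x_n-y)\big)$, applying $\|a+b\|^2 \le 2(\|a\|^2+\|b\|^2)$, bounding $\sum_i\|A_i\|^2\|A_i x_{n,k}-y_i\|^2 \le L\|A x_{n,k}-y\|^2$ via the definition of $L$ and $\sum_i\|A_i v\|^2 = \|A v\|^2$, and discarding the nonnegative term $\tfrac{2\gamma_1^2}{N^2}\|A^*(A x_n-y)\|^2$ created by the cross product. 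This yields
\[
\EE\left[\|x_{n,k+1}-x^\dag\|^2 \,|\, \F_{n,k}\right] - \|x_{n,k}-x^\dag\|^2 \le -\frac{2\gamma_1(1-\gamma_1 L)}{N}\|A x_{n,k}-y\|^2 + \frac{2\gamma_1^2 L}{N}\|A x_n - y\|^2.
\]

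Finally I would pass to the conditioning on $\F_n$ by the tower property, sum over $k=0,\dots,m-1$ (so that the $\F_n$-measurable quantity $\|A x_n-y\|^2$ picks up the factor $m$), use $x_{n+1}=x_{n,m}$, and add the deterministic outer-step estimate. The two $\|A x_n-y\|^2$ contributions then merge into the single coefficient $2\gamma_0 - \gamma_0^2\|A\|^2 - 2m\gamma_1^2 L/N$, giving exactly the claimed inequality. The one place demanding care — and where I expect the main bookkeeping effort — is the quadratic-term estimate: one must track the sign of every residual contribution and check that the cross product really produces the negative multiple of $\|A^*(A x_n-y)\|^2$ that is subsequently discarded, since this is what lets the $x_{n,k}$-residual and $x_n$-residual separate cleanly into the two coefficients above. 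Everything else is the stability computation run with the noise switched off.
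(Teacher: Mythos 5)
Your proposal is correct and follows essentially the same route as the paper's proof: the deterministic estimate for the outer Landweber step, the conditional expectation over $i_{n,k}$ collapsing the cross term to $-\tfrac{2\gamma_1}{N}\|A x_{n,k}-y\|^2$, the splitting of the quadratic term with $\|a+b\|^2\le 2(\|a\|^2+\|b\|^2)$ and the discarding of the resulting non-positive multiple of $\|A^*(Ax_n-y)\|^2$, followed by the tower property and summation over $k$. The only difference is the order (you treat the outer step first, the paper last), which is immaterial.
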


\begin{proof}
We first use the polarization identity and the definition of $x_{n,k+1}$ to write  
\begin{align*}
& \|x_{n,k+1} - x^\dag\|^2 - \|x_{n,k} - x^\dag\|^2 \\
& = 2 \l x_{n,k+1} - x_{n,k}, x_{n,k} - x^\dag\r + \|x_{n, k+1} - x_{n,k}\|^2  \displaybreak[0]\\
& = - 2 \gamma_1 \left\l A_{i_{n,k}}^*A_{i_{n,k}}(x_{n,k} -x_n) + \frac{1}{N} A^*(A x_n -y), x_{n,k} - x^\dag\right\r  \displaybreak[0]\\
& \quad\, + \gamma_1^2 \left\|A_{i_{n,k}}^*A_{i_{n,k}}(x_{n,k} -x_n) + \frac{1}{N} A^*(A x_n -y) \right\|^2. 
\end{align*}
Taking the conditional expectation on $\F_{n,k}$ gives 
\begin{align*}
& \EE\left[\|x_{n,k+1} - x^\dag\|^2 | \F_{n,k}\right] - \|x_{n,k} - x^\dag\|^2  \displaybreak[0] \\
&  = - \frac{2\gamma_1}{N} \sum_{i=1}^N \left\l A_i^*A_i(x_{n,k} -x_n) + \frac{1}{N} A^*(A x_n -y), x_{n,k} - x^\dag\right\r  \displaybreak[0]\\
& \quad\, + \frac{\gamma_1^2}{N} \sum_{i=1}^N \left\|A_i^*A_i(x_{n,k} -x_n) + \frac{1}{N} A^*(A x_n -y) \right\|^2  \displaybreak[0]\\
&  = - \frac{2\gamma_1}{N} \l A^*(A x_{n,k}-y), x_{n,k} - x^\dag \r  \displaybreak[0]\\
& \quad\, + \frac{\gamma_1^2}{N} \sum_{i=1}^N \left\|A_i^*A_i(x_{n,k} -x_n) + \frac{1}{N} A^*(A x_n -y) \right\|^2.
\end{align*}
By using the inequality $\|a+b\|^2 \le 2(\|a\|^2 + \|b\|^2)$ and the polarization identity, we have 
\begin{align*}
& \EE\left[\|x_{n,k+1} - x^\dag\|^2 | \F_{n,k}\right] - \|x_{n,k} - x^\dag\|^2  \displaybreak[0] \\
&  \le - \frac{2\gamma_1}{N} \|A x_{n,k} - y\|^2
+ \frac{2\gamma_1^2}{N} \sum_{i=1}^N \|A_i^*(A_i x_{n,k} -y_i)\|^2  \displaybreak[0]\\
& \quad \, + \frac{2\gamma_1^2}{N} \sum_{i=1}^N \left\|A_i^*(A_i x_n - y_i) - \frac{1}{N} A^*(A x_n - y)\right\|^2 \displaybreak[0]\\
& = - \frac{2\gamma_1}{N} \|A x_{n,k} - y\|^2 
+ \frac{2\gamma_1^2}{N} \sum_{i=1}^N \|A_i^*(A_i x_{n,k} - y_i)\|^2 
+ \frac{2\gamma_1^2}{N} \sum_{i=1}^N \|A_i^*(A_i x_n - y_i)\|^2 \displaybreak[0]\\
& \quad \, - \frac{4\gamma_1^2}{N^2} \sum_{i=1}^N \l A_i^*(A_i x_n - y_i), A^*(A x_n - y)\r 
+ \frac{2\gamma_1^2}{N^2} \|A^*(A x_n - y)\|^2 \displaybreak[0]\\
& \le - \frac{2\gamma_1}{N} \|A x_{n,k} - y\|^2 
+ \frac{2\gamma_1^2}{N} \sum_{i=1}^N \|A_i\|^2 \left(\|A_i x_{n,k} - y_i\|^2 
+ \|A_i x_n - y_i\|^2\right) \displaybreak[0] \\
& \quad \, - \frac{2\gamma_1^2}{N^2} \|A^*(A x_n - y)\|^2 \displaybreak[0]\\
& \le - \frac{1}{N}\left(2 \gamma_1 - 2 \gamma_1^2 L\right) \| A x_{n,k} -y\|^2 
+ \frac{2 \gamma_1^2 L}{N} \|A x_n - y\|^2.
\end{align*}
Consequently, by using the tower property of conditional expectation, we can obtain 
\begin{align*}
& \EE\left[\|x_{n,k+1} - x^\dag\|^2 | \F_n\right] - \EE\left[\|x_{n,k} - x^\dag\|^2 | \F_n\right]  \displaybreak[0]\\
& \le - \frac{1}{N}\left(2 \gamma_1 - 2 \gamma_1^2 L\right) \EE\left[\|A x_{n,k} - y\|^2|\F_n\right] 
+ \frac{2 \gamma_1^2 L}{N} \|A x_n - y\|^2.
\end{align*}
Summing this inequality over $k$ from $0$ to $m-1$, noting that $x_{n+1} = x_{n,m}$ and 
$\EE[\|x_{n,0} - x^\dag\|^2 | \F_n] = \|x_{n,0} - x^\dag\|^2$, we can obtain 
\begin{align}\label{SVRG.3}
\EE\left[\|x_{n+1} - x^\dag\|^2 | \F_n\right] - \|x_{n,0} - x^\dag\|^2 
& \le -\frac{2 \gamma_1 (1-\gamma_1 L)}{N} \sum_{k=0}^{m-1} \EE\left[\|A x_{n,k} - y\|^2|\F_n\right] \nonumber \\
& \quad \, + \frac{2 m \gamma_1^2 L}{N} \|A x_n - y\|^2. 
\end{align}
Next, by the definition of $x_{n,0}$, we have 
\begin{align*}
\|x_{n,0} - x^\dag\|^2 - \|x_n - x^\dag\|^2 
& = 2\l x_{n,0} - x_n, x_n - x^\dag\r + \|x_{n,0} - x_n\|^2 \\
& = - 2\gamma_0 \l A^*(A x_n - y), x_n - x^\dag\r + \gamma_0^2 \|A^*(A x_n - y)\|^2 \\
& \le -\left(2 \gamma_0 - \gamma_0^2 \|A\|^2\right) \|A x_n -y\|^2. 
\end{align*}
Adding this inequality to (\ref{SVRG.3}), we therefore complete the proof. 
\end{proof}

To proceed further, we need an equivalent formulation of Algorithm \ref{alg:SVRG} with exact data. 
From the definition of Algorithm \ref{alg:SVRG} we can note that 
$x_n, x_{n,k}\in x_0 + \mbox{Ran}(A^*)$ and thus there exist $\la_n, \la_{n,k} \in Y$ 
such that $x_n = x_0 + A^* \la_n$ and $x_{n,k} = x_0 + A^* \la_{n,k}$. We need a procedure 
to construct such $\la_n$ and $\la_{n,k}$ and then use them to achieve our goal. This 
inspires us to introduce the following Algorithm \ref{alg:SVRG2} which is easily 
seen to be equivalent to Algorithm \ref{alg:SVRG} with exact data, i.e. the random sequences 
$\{x_n\}$ and $\{x_{n,k}\}$ produced by Algorithm \ref{alg:SVRG2} are exactly the same 
ones produced by Algorithm \ref{alg:SVRG} with exact data. 

\begin{algorithm}
        \caption{}\label{alg:SVRG2}
	\begin{algorithmic}[0]	
		\STATE \textbf{input:} update frequency $m$, initial guess $\la_0 = 0\in Y$, $x_0\in X$, step-sizes $\gamma_0$, $\gamma_1$. 
		
		\STATE {\bf for} $n = 0, 1, \cdots$ {\bf do}  
		
		\STATE \qquad $\displaystyle{\mu_n = A x_n - y}$; 
  
        \STATE \qquad $\displaystyle{\la_{n,0} = \la_n - \gamma_0 \mu_n}$; \, \, $\displaystyle{x_{n,0} = x_0 + A^* \la_{n,0}}$; 
		
		\STATE \qquad {\bf for } $k = 0, \cdots, m-1$ {\bf do} 
		
		\STATE \qquad \qquad pick $i_{n,k} \in \{1, \cdots, N\}$ randomly via uniform distribution; 
		
		\STATE \qquad \qquad $\displaystyle{\mu_{n,k} = (0, \cdots, 0, A_{i_{n,k}} (x_{n,k} - x_n), 0, \cdots, 0)  + \frac{1}{N}\mu_n}$; 
		
		\STATE \qquad \qquad  $\displaystyle{\la_{n,k+1} = \la_{n,k} - \gamma_1 \mu_{n,k}}$; \, \, $x_{n,k+1} = x_0 + A^*\la_{n, k+1}$; 
		
		\STATE \qquad {\bf end for} 
		
		\STATE \qquad $\la_{n+1} = \la_{n,m}$; \, \, $x_{n+1} = x_0 +  A^* \la_{n+1}$; 
		
		\STATE {\bf end} 
	\end{algorithmic}
\end{algorithm}

In the formulation of Algorithm \ref{alg:SVRG2}, $(0, \cdots, 0, A_{i_{n,k}} (x_{n,k} - x_n), 0, \cdots, 0)$ denotes the element in $Y$ whose $i_{n,k}$-th component is $A_{i_{n,k}} (x_{n,k} - x_n)$ and other components are $0$. 

\begin{lemma}\label{SVRG.lem4}
Assume the source condition (\ref{sc}) holds. For any integer $n \ge 0$ there holds 
\begin{align*}
\EE[\|\la_{n+1} - \la^\dag\|^2] - \EE[\|\la_n - \la^\dag\|^2] 
& \le -2\gamma_0 \EE[\|x_n - x^\dag\|^2] - \frac{2\gamma_1}{N} \sum_{k=0}^{m-1} \EE[\|x_{n,k} - x^\dag\|^2] \\
& \quad \, + \frac{2\gamma_1^2}{N} \sum_{k=0}^{m-1} \EE[\|A x_{n,k} -y\|^2] \\
& \quad \, + \left (\gamma_0^2 + \frac{2m\gamma_1^2}{N}\right) \EE[\|A x_n - y\|^2] 
\end{align*}
\end{lemma}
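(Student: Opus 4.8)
The plan is to run the entire analysis on the dual iterates $\la_n,\la_{n,k}$ generated by Algorithm \ref{alg:SVRG2}, measuring progress by $\|\la_n-\la^\dag\|^2$ in $Y$. The source condition (\ref{sc}) gives $x^\dag = x_0 + A^*\la^\dag$, so combining this with $x_{n,k}=x_0+A^*\la_{n,k}$ yields the two identities I will use repeatedly: $A^*(\la_{n,k}-\la^\dag)=x_{n,k}-x^\dag$ and $Ax_{n,k}-y = A(x_{n,k}-x^\dag)$, and likewise with $\la_n,x_n$. These are precisely what convert inner products taken in $Y$ into squared norms in $X$.

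First I would handle one step of the inner loop. Since $\la_{n,k+1}-\la_{n,k}=-\gamma_1\mu_{n,k}$, the polarization identity gives
\[
\|\la_{n,k+1}-\la^\dag\|^2 - \|\la_{n,k}-\la^\dag\|^2 = -2\gamma_1\l \mu_{n,k},\la_{n,k}-\la^\dag\r + \gamma_1^2\|\mu_{n,k}\|^2.
\]
Taking $\EE[\cdot|\F_{n,k}]$ and averaging the uniform index $i_{n,k}$, the cross term collapses cleanly: $\EE[\mu_{n,k}|\F_{n,k}] = \frac1N\bigl(A(x_{n,k}-x_n)+\mu_n\bigr) = \frac1N(Ax_{n,k}-y)$, so by the identities above $\EE[\l\mu_{n,k},\la_{n,k}-\la^\dag\r|\F_{n,k}] = \frac1N\|x_{n,k}-x^\dag\|^2$, producing the wanted negative contribution $-\frac{2\gamma_1}{N}\|x_{n,k}-x^\dag\|^2$.

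The heart of the argument, and where I expect the main difficulty, is the quadratic term $\EE[\|\mu_{n,k}\|^2|\F_{n,k}]$. Writing $w_i$ and $z_i$ for the one-component vectors of $Y$ carrying $A_i x_{n,k}-y_i$ and $A_i x_n - y_i$ in their $i$-th slot, the variance-reduced gradient decomposes as $\mu_{n,k}=w_{i_{n,k}}+\bigl(\frac1N\mu_n - z_{i_{n,k}}\bigr)$; the whole point of the construction is that $\sum_i z_i = \mu_n$ and that the $z_i$ occupy mutually orthogonal slots of $Y$. Applying $\|a+b\|^2\le 2\|a\|^2+2\|b\|^2$ and averaging, the first piece gives $\frac1N\sum_i\|w_i\|^2 = \frac1N\|Ax_{n,k}-y\|^2$, while the orthogonality identity forces $\frac1N\sum_i\|\frac1N\mu_n - z_i\|^2 = \frac1N\bigl(1-\frac1N\bigr)\|\mu_n\|^2 \le \frac1N\|Ax_n-y\|^2$. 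Hence $\EE[\|\mu_{n,k}\|^2|\F_{n,k}] \le \frac2N\|Ax_{n,k}-y\|^2 + \frac2N\|Ax_n-y\|^2$, which is the origin of the two positive variance terms in the claim.

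Finally I would assemble the estimate: use the tower property to condition on $\F_n$, sum the inner-loop bound over $k=0,\dots,m-1$ with $\la_{n+1}=\la_{n,m}$, and add the contribution of the initial step $\la_{n,0}=\la_n-\gamma_0\mu_n$, which by the same polarization-plus-identity computation equals exactly $-2\gamma_0\|x_n-x^\dag\|^2+\gamma_0^2\|Ax_n-y\|^2$. Taking full expectations then reproduces the stated inequality, with the $\gamma_1^2\cdot\frac2N\|Ax_n-y\|^2$ terms summing to $\frac{2m\gamma_1^2}{N}\|Ax_n-y\|^2$ and combining with $\gamma_0^2\|Ax_n-y\|^2$. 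Note that no signing of coefficients and hence no restriction on $\gamma_0,\gamma_1$ is needed here — consistent with the lemma assuming only (\ref{sc}) — since it is purely the variance-reduction structure of Algorithm \ref{alg:SVRG2} that drives the bound.
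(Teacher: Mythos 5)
Your proposal is correct and follows essentially the same route as the paper: polarization on the dual iterates, the source-condition identity $A^*(\la_{n,k}-\la^\dag)=x_{n,k}-x^\dag$ to convert the cross term, the bound $\EE[\|\mu_{n,k}\|^2|\F_{n,k}]\le \frac{2}{N}\|Ax_{n,k}-y\|^2+\frac{2}{N}\|Ax_n-y\|^2$, and summation over the inner loop plus the $\gamma_0$ step. Your treatment of the variance term via the orthogonal-slot identity $\frac1N\sum_i\|\frac1N\mu_n-z_i\|^2=\frac1N(1-\frac1N)\|\mu_n\|^2$ is a slightly cleaner packaging of the paper's componentwise bookkeeping, but it is the same estimate.
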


\begin{proof}
By the definition of $\la_{n,0}$, $x_n = x_0 + A^* \la_n$, and (\ref{sc}) we first have 
\begin{align}\label{SVRG.10}
\|\la_{n,0} - \la^\dag\|^2 - \|\la_n - \la^\dag\|^2 
& = 2 \l \la_{n,0} - \la_n, \la_n - \la^\dag\r + \|\la_{n,0} - \la_n\|^2 \nonumber \\
& = - 2\gamma_0 \l A x_n - y, \la_n - \la^\dag\r 
+ \gamma_0^2 \|A x_n - y\|^2 \nonumber \\
& = - 2\gamma_0 \l x_n - x^\dag, A^*(\la_n - \la^\dag)\r 
+ \gamma_0^2 \|A x_n - y\|^2 \nonumber \\
& = - 2\gamma_0 \|x_n - x^\dag\|^2 
+ \gamma_0^2 \|A x_n - y\|^2.
\end{align}
Next by using the definition of $\la_{n, k+1}$ we have 
\begin{align*}
\|\la_{n, k+1} - \la^\dag\|^2 - \|\la_{n,k} -\la^\dag\|^2 
& = 2 \l \la_{n, k+1} - \la_{n,k}, \la_{n,k} - \la^\dag\r + \|\la_{n,k+1} - \la_{n,k}\|^2 \displaybreak[0]\\
& = - 2\gamma_1 \l \mu_{n,k}, \la_{n,k} - \la^\dag\r + \gamma_1^2 \|\mu_{n,k}\|^2 \displaybreak[0]\\
& = -2\gamma_1 \l A_{i_{n,k}}(x_{n,k} - x_n), (\la_{n,k} - \la^\dag)_{i_{n,k}}\r \displaybreak[0]\\
& \quad \, - \frac{2 \gamma_1}{N} \l \mu_n, \la_{n,k} - \la^\dag\r + \gamma_1^2 \|\mu_{n,k}\|^2,
\end{align*}
where we used $(\la_{n,k} - \la^\dag)_i$ to denote the $i$th component of $\la_{n,k} - \la^\dag$. 
Therefore, by taking the conditional expectation on $\F_{n,k}$ and using $x_{n,k} = x_0 + A^*\la_{n,k}$ and (\ref{sc}), we can obtain
\begin{align*}
&\EE[\|\la_{n, k+1} - \la^\dag\|^2|\F_{n,k}] - \|\la_{n,k} -\la^\dag\|^2 \displaybreak[0]\\
& = -\frac{2\gamma_1}{N} \sum_{i=1}^N \l A_i(x_{n,k} - x_n), (\la_{n,k} - \la^\dag)_i\r 
- \frac{2 \gamma_1}{N} \l \mu_n, \la_{n,k} - \la^\dag\r + \gamma_1^2 \EE[\|\mu_{n,k}\|^2|\F_{n,k}] \displaybreak[0]\\
& = - \frac{2\gamma_1}{N} \l A(x_{n,k} - x_n), \la_{n,k} - \la^\dag\r 
-\frac{2\gamma_1}{N} \l A x_n -y, \la_{n,k} - \la^\dag\r 
+ \gamma_1^2 \EE[\|\mu_{n,k}\|^2|\F_{n,k}] \displaybreak[0]\\
& = - \frac{2\gamma_1}{N} \l A x_{n,k} - y, \la_{n,k} - \la^\dag\r 
+ \gamma_1^2 \EE[\|\mu_{n,k}\|^2|\F_{n,k}] \displaybreak[0]\\
& = - \frac{2\gamma_1}{N} \l x_{n,k} - x^\dag, A^*(\la_{n,k} - \la^\dag)\r 
+ \gamma_1^2 \EE[\|\mu_{n,k}\|^2|\F_{n,k}] \displaybreak[0]\\
& = - \frac{2\gamma_1}{N} \|x_{n,k} - x^\dag\|^2 
+ \gamma_1^2 \EE[\|\mu_{n,k}\|^2|\F_{n,k}]. 
\end{align*}
We need to estimate $\EE[\|\mu_{n,k}\|^2|\F_{n,k}]$. By the definition of $\mu_{n,k}$ we have 
\begin{align*}
\|\mu_{n,k}\|^2 
& = \frac{1}{N^2} \sum_{j\ne i_{n,k}} \|A_j x_n - y_j\|^2 \\
& \quad \, + \left\| (A_{i_{n,k}} x_{n,k} - y_{i_{n,k}}) - \frac{N-1}{N} (A_{i_{n,k}} x_n - y_{i_{n,k}})\right\|^2. 
\end{align*}
Thus 
\begin{align*}
& \EE[\|\mu_{n,k}\|^2 | \F_{n,k}] \\
& = \frac{1}{N^3} \sum_{i=1}^N \sum_{j\ne i} \|A_j x_n - y_j\|^2 
+ \frac{1}{N} \sum_{i=1}^N \left\|(A_i x_{n,k} - y_i) - \frac{N-1}{N} (A_i x_n - y_i)\right\|^2 \\
& \le \frac{N-1}{N^3} \sum_{i=1}^N \|A_i x_n - y_i\|^2 + \frac{2}{N} \sum_{i=1}^N \|A_i x_{n,k} - y_i\|^2 \\
& \quad \, + \frac{2}{N} \left(\frac{N-1}{N}\right)^2 \sum_{i=1}^N \|A_i x_n - y_i\|^2 \\
& \le \frac{2}{N} \|A x_n - y\|^2 + \frac{2}{N} \|A x_{n,k} - y\|^2 
\end{align*}
%\begin{align*}
%\|\mu_{n,k}\|^2  
%&\le 2 \|(0, \cdots, 0, A_{i_{n,k}} x_{n,k} - y_{i_{n,k}}, 0, \cdots, 0)\|^2 \displaybreak[0]\\
%& \quad \,  + 2 \|(0, \cdots, 0, A_{i_{n,k}} x_n - y_{i_{n,k}}, 0, \cdots, 0) - \mu_n\|^2 \displaybreak[0]\\
%& = 2 \|A_{i_{n,k}} x_{n,k} - y_{i_{n,k}}\|^2 + 2 \|A_{i_{n,k}} x_n - y_{i_{n,k}}\|^2 + 2 \|\mu_n\|^2 \displaybreak[0]\\
%& \quad \, - 4 \l A_{i_{n,k}} x_n - y_{i_{n,k}}, (\mu_n)_{i_{n,k}}\r. 
%\end{align*}
%Thus 
%\begin{align*}
%\EE[\|\mu_{n,k}\|^2|\F_{n,k}] 
%& \le \frac{2}{N} \sum_{i=1}^N \|A_i x_{n,k} - y_i\|^2 + \frac{2}{N} \sum_{i=1}^N \|A_i x_n - y\|^2 \displaybreak[0]\\
%& \quad \, + 2 \|\mu_n\|^2 - \frac{4}{N} \sum_{i=1}^N \l A_i x_n - y_i, (\mu_n)_i\r \displaybreak[0]\\
%& = \frac{2}{N} \|A x_{n,k} - y\|^2 + \frac{2}{N} \|A x_n - y\|^2 - 2 \|\mu_n\|^2 \displaybreak[0]\\
%& \le \frac{2}{N} \|A x_{n,k} - y\|^2 + \frac{2}{N} \|A x_n - y\|^2. 
%\end{align*}
Consequently 
\begin{align*}
&\EE[\|\la_{n,k+1} - \la^\dag\|^2|\F_{n,k}] - \|\la_{n,k} - \la^\dag\|^2 \\
& \le -\frac{2\gamma_1}{N} \|x_{n,k} - x^\dag\|^2 + \frac{2\gamma_1^2}{N} \|A x_{n,k} - y\|^2 + \frac{2\gamma_1^2}{N} \|A x_n -y\|^2
\end{align*}
Therefore 
\begin{align*}
&\EE[\|\la_{n,k+1} - \la^\dag\|^2|\F_n] - \EE[\|\la_{n,k} - \la^\dag\|^2|\F_n] \\
& \le -\frac{2\gamma_1}{N} \EE[\|x_{n,k} - x^\dag\|^2|\F_n] 
+ \frac{2\gamma_1^2}{N} \EE[\|A x_{n,k} - y\|^2|\F_n] + \frac{2\gamma_1^2}{N} \|A x_n -y\|^2
\end{align*}    
Summing this inequality over $k$ from $k = 0$ to $k = m-1$ and then adding with (\ref{SVRG.10})
we thus obtain 
\begin{align*}
\EE[\|\la_{n+1} - \la^\dag\|^2 | \F_n] - \|\la_n - \la^\dag\|^2 
& \le - 2\gamma_0 \|x_n - x^\dag\|^2 - \frac{2\gamma_1}{N} \sum_{k=0}^{m-1} \EE[\|x_{n,k} - x^\dag\|^2|\F_n] \displaybreak[0]\\
& \quad \, + \frac{2\gamma_1^2}{N} \sum_{k=0}^{m-1} \EE[\|A x_{n,k} -y\|^2|\F_n] \\
& \quad \, + \left(\gamma_0^2 + \frac{2m\gamma_1^2}{N}\right) \|A x_n - y\|^2 
\end{align*}
which implies the desired result by taking the full expectation. 
\end{proof}

\begin{lemma}\label{SVRG:lem5}
Consider Algorithm \ref{alg:SVRG} with exact data and assume that $\gamma_0>0$ and $\gamma_1>0$ are 
chosen such that (\ref{SVRG.5}) holds. If $x^\dag$ satisfies the source condition (\ref{sc}), then 
$$
\EE[\|x_n - x^\dag\|^2] \le \frac{\|x_0 - x^\dag\|^2 + \eta \|\la^\dag\|^2}{2 \gamma_0 \eta (n+1)}
$$
for all integers $n \ge 0$, where 
$$
\eta:= \min\left\{\frac{1-\gamma_1 L}{\gamma_1}, 
\frac{2\gamma_0 - \gamma_0^2 \|A\|^2 - 2m \gamma_1^2 L/N}{\gamma_0^2 + 2m \gamma_1^2/N}\right\}.
$$
\end{lemma}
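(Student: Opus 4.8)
The plan is to combine the two previous lemmas into a single monotonicity-type inequality and then exploit the source condition to turn that inequality into a recursion from which the $O(1/n)$ bound follows. The two ingredients are already in hand: Lemma \ref{SVRG.lem1} controls the decrease of $\EE[\|x_{n+1}-x^\dag\|^2]$ in terms of the residual quantities $\|Ax_n-y\|^2$ and $\sum_k \EE[\|Ax_{n,k}-y\|^2]$, while Lemma \ref{SVRG.lem4} controls the change in $\EE[\|\la_n-\la^\dag\|^2]$ in terms of those same residual quantities plus the iterate-error terms $\|x_n-x^\dag\|^2$ and $\sum_k\EE[\|x_{n,k}-x^\dag\|^2]$. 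The key observation is that the residual terms appear with \emph{opposite} signs (negative in Lemma \ref{SVRG.lem1}, positive in Lemma \ref{SVRG.lem4}), so forming an appropriate linear combination will cancel or dominate them.

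First I would form the combination $\EE[\|x_{n+1}-x^\dag\|^2]+\theta\,\EE[\|\la_{n+1}-\la^\dag\|^2]$ for a suitable weight $\theta>0$ and use the two lemmas to bound its increment over one step. After collecting terms, the coefficient multiplying $\sum_{k=0}^{m-1}\EE[\|Ax_{n,k}-y\|^2]$ will be $-\frac{2\gamma_1(1-\gamma_1 L)}{N}+\theta\frac{2\gamma_1^2}{N}$ and the coefficient multiplying $\|Ax_n-y\|^2$ will be $-\left(2\gamma_0-\gamma_0^2\|A\|^2-\frac{2m\gamma_1^2 L}{N}\right)+\theta\left(\gamma_0^2+\frac{2m\gamma_1^2}{N}\right)$. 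The role of $\eta$ becomes clear here: choosing $\theta=\eta$, the definition of $\eta$ as the minimum of the two displayed ratios guarantees that \emph{both} residual coefficients are $\le 0$, so those terms can be discarded. What survives on the negative side are the genuinely useful terms $-2\gamma_0\eta\,\EE[\|x_n-x^\dag\|^2]$ and $-\frac{2\gamma_1\eta}{N}\sum_k\EE[\|x_{n,k}-x^\dag\|^2]$; dropping the (nonnegative) latter sum, I retain the clean estimate
\begin{align*}
\EE[\|x_{n+1}-x^\dag\|^2]+\eta\,\EE[\|\la_{n+1}-\la^\dag\|^2]
\le \EE[\|x_n-x^\dag\|^2]+\eta\,\EE[\|\la_n-\la^\dag\|^2]-2\gamma_0\eta\,\EE[\|x_n-x^\dag\|^2].
\end{align*}

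Next I would define $a_n:=\EE[\|x_n-x^\dag\|^2]+\eta\,\EE[\|\la_n-\la^\dag\|^2]$, so that the above reads $a_{n+1}\le a_n-2\gamma_0\eta\,\EE[\|x_n-x^\dag\|^2]$. Summing from $0$ to $n$ telescopes the left side and yields $2\gamma_0\eta\sum_{j=0}^{n}\EE[\|x_j-x^\dag\|^2]\le a_0-a_{n+1}\le a_0=\|x_0-x^\dag\|^2+\eta\|\la^\dag\|^2$, using $\la_0=0$ and the nonnegativity of $a_{n+1}$. The final step is a monotonicity argument: since the same combination shows $a_{n+1}\le a_n$ and in particular $\EE[\|x_{n+1}-x^\dag\|^2]\le a_{n+1}\le a_n$ need not hold for $\EE[\|x_j-x^\dag\|^2]$ alone, I instead observe that the sequence $j\mapsto \EE[\|x_j-x^\dag\|^2]$ is itself nonincreasing — this follows by running the same combined estimate but reading off that each $a_{j+1}\le a_j$ forces the iterate error to decrease, or more directly from Lemma \ref{SVRG.lem1} which already gives $\EE[\|x_{n+1}-x^\dag\|^2]\le\EE[\|x_n-x^\dag\|^2]$ since its right-hand side is nonpositive under (\ref{SVRG.5}). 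Hence $(n+1)\EE[\|x_n-x^\dag\|^2]\le\sum_{j=0}^{n}\EE[\|x_j-x^\dag\|^2]$, and substituting the summed bound gives exactly $\EE[\|x_n-x^\dag\|^2]\le\frac{\|x_0-x^\dag\|^2+\eta\|\la^\dag\|^2}{2\gamma_0\eta(n+1)}$.

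The main obstacle I anticipate is verifying that the choice $\theta=\eta$ really makes both residual coefficients nonpositive simultaneously; this is precisely the content of the two-term minimum in the definition of $\eta$, and I would check the two cases separately, confirming that the first ratio kills the $\sum_k\|Ax_{n,k}-y\|^2$ coefficient and the second kills the $\|Ax_n-y\|^2$ coefficient. A secondary point requiring care is justifying the monotonicity of $\EE[\|x_j-x^\dag\|^2]$ used in the last step; the cleanest route is to invoke Lemma \ref{SVRG.lem1} directly, whose right-hand side is manifestly $\le 0$ under assumption (\ref{SVRG.5}), giving the needed nonincreasing property after taking full expectations.
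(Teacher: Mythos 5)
Your proposal is correct and follows essentially the same route as the paper: the paper likewise forms $\Delta_n:=\|x_n-x^\dag\|^2+\eta\|\la_n-\la^\dag\|^2$, uses the definition of $\eta$ (as the minimum of exactly the two ratios you identify) to make both residual coefficients nonpositive, telescopes to bound $2\gamma_0\eta\sum_{l=0}^n\EE[\|x_l-x^\dag\|^2]$ by $\Delta_0$, and invokes the monotonicity of $\EE[\|x_l-x^\dag\|^2]$ coming from Lemma \ref{SVRG.lem1} under (\ref{SVRG.5}) to conclude. No gaps.
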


\begin{proof}
According to Lemma \ref{SVRG.lem1} we have 
\begin{align}\label{SVRG.9}
\EE\left[\|x_{n+1} - x^\dag\|^2\right] - \EE[\|x_n - x^\dag\|^2] 
& \le - \left(2 \gamma_0 - \gamma_0^2 \|A\|^2 -  \frac{2 m \gamma_1^2 L}{N}\right) 
\EE[\|A x_n - y\|^2] \nonumber \\
& \quad \, -\frac{2 \gamma_1 (1-\gamma_1 L)}{N} \sum_{k=0}^{m-1} \EE\left[\|A x_{n,k} - y\|^2\right].
\end{align}
Consider the sequence 
$$
\Delta_n:= \|x_n-x^\dag\|^2 + \eta \|\la_n - \la^\dag\|^2, \quad n = 0, 1, \cdots.
$$
It follows from (\ref{SVRG.9}) and Lemma \ref{SVRG.lem4}  that 
\begin{align*}
\EE[\Delta_{n+1}] - \EE[\Delta_n] 
& \le - 2\gamma_0 \eta \EE[\|x_n-x^\dag\|^2] 
- \frac{2\gamma_1 \eta}{N} \sum_{k=0}^m \EE[\|x_{n,k} - x^\dag\|^2] \\
& \le - 2\gamma_0 \eta \EE[\|x_n-x^\dag\|^2]. 
\end{align*}
Consequently 
$$
2\gamma_0 \eta \sum_{l=0}^n \EE[\|x_l - x^\dag\|^2] \le \EE[\Delta_0] = \Delta_0. 
$$
Since (\ref{SVRG.9}) and (\ref{SVRG.5}) imply that $\EE[\|x_l - x^\dag\|]$ is monotonically decreasing, we therefore have 
$$
2\gamma_0 \eta (n+1) \EE[\|x_n - x^\dag\|^2] \le \Delta_0 
$$
which implies the desired result. 
\end{proof}

\begin{theorem}\label{svrg.thm}
Consider Algorithm \ref{alg:SVRG} with $\gamma_0>0$ and $\gamma_1>0$ being chosen such that 
(\ref{SVRG.5}) holds. Assume that $x^\dag$ satisfies the source condition (\ref{sc}). If
the integer $n_\d$ is chosen such that $n_\d \sim \d^{-1}$, then 
$$
\EE\left[\|x_{n_\d}^\d - x^\dag\|^2\right] \le C_1 \d,
$$
where $C_1$ is a constant depending only on $\gamma_0$, $\gamma_1$, $\|A\|$, $L$, the ratio 
$m/N$, $\|x_0-x^\dag\|$ and $\|\la^\dag\|$. 
\end{theorem}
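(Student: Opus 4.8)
The plan is to combine the two estimates already established — the stability bound of Lemma~\ref{SVRG:lem6} and the exact-data rate of Lemma~\ref{SVRG:lem5} — through a bias--variance splitting of the total error against the auxiliary exact-data iterate $x_{n_\d}$. Applying the elementary inequality $\|a-c\|^2 \le 2\|a-b\|^2 + 2\|b-c\|^2$ with $a = x_{n_\d}^\d$, $b = x_{n_\d}$, $c = x^\dag$ gives
\begin{align*}
\|x_{n_\d}^\d - x^\dag\|^2 \le 2\|x_{n_\d}^\d - x_{n_\d}\|^2 + 2\|x_{n_\d} - x^\dag\|^2,
\end{align*}
and I would then take the full expectation. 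The first term on the right is a propagated data-noise (``variance'') term, the second an approximation (``bias'') term.

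For the variance term, Lemma~\ref{SVRG:lem6} supplies $\EE[\|x_{n_\d}^\d - x_{n_\d}\|^2] \le C_0 n_\d \d^2$, which grows linearly in the iteration count; its validity only needs (\ref{SVRG.5}). For the bias term, Lemma~\ref{SVRG:lem5} applies because the source condition (\ref{sc}) holds and (\ref{SVRG.5}) is assumed, yielding $\EE[\|x_{n_\d} - x^\dag\|^2] \le (\|x_0 - x^\dag\|^2 + \eta\|\la^\dag\|^2)/(2\gamma_0\eta(n_\d+1))$, which decays like $1/n_\d$. Substituting both bounds produces
\begin{align*}
\EE[\|x_{n_\d}^\d - x^\dag\|^2] \le 2 C_0 n_\d \d^2 + \frac{\|x_0 - x^\dag\|^2 + \eta\|\la^\dag\|^2}{\gamma_0\eta(n_\d+1)}.
\end{align*}

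The final step is to balance these two competing terms against each other. With the stopping choice $n_\d \sim \d^{-1}$, the variance term is $2C_0 n_\d \d^2 = O(\d)$ and the bias term is $O(1/n_\d) = O(\d)$, so both contributions are of order $\d$ and can be collected into a single constant $C_1$ depending only on $\gamma_0$, $\gamma_1$, $\|A\|$, $L$, and the ratio $m/N$ (through $C_0$ and $\eta$), together with $\|x_0 - x^\dag\|$ and $\|\la^\dag\|$. Since neither underlying lemma imposed any structural assumption on $A$, the conclusion holds for a general bounded linear operator $A$.

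In terms of difficulty, essentially all of the analytic work has been front-loaded into Lemmas~\ref{SVRG:lem6} and~\ref{SVRG:lem5}, so the theorem itself reduces to this short combination. The only point deserving care is making the informal relation $n_\d \sim \d^{-1}$ precise — reading it, say, as $c_1\d^{-1} \le n_\d \le c_2\d^{-1}$ — and verifying that the implicit constant can be fixed so that the two $O(\d)$ terms are genuinely balanced rather than one of them dominating; minimizing the right-hand side above in $n_\d$ shows the optimal choice is $n_\d + 1 \approx \big((\|x_0 - x^\dag\|^2 + \eta\|\la^\dag\|^2)/(2C_0\gamma_0\eta)\big)^{1/2}\d^{-1}$, which is indeed of order $\d^{-1}$ and pins down the size of $C_1$.
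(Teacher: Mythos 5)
Your proposal is correct and follows essentially the same route as the paper: the same bias--variance splitting via the triangle inequality, followed by Lemma \ref{SVRG:lem6} for the noise-propagation term and Lemma \ref{SVRG:lem5} for the exact-data term, balanced by the choice $n_\d \sim \d^{-1}$. Your closing remark on optimizing the constant is a small addition beyond what the paper records, but the argument is the same.
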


\begin{proof}
By the triangle inequality we have 
$$
\|x_n^\d - x^\dag\|^2 \le (\|x_n^\d- x_n\| + \|x_n - x^\dag\|)^2 
\le 2 \|x_n^\d - x_n\|^2 + 2 \|x_n - x^\dag\|^2. 
$$
Thus 
$$
\EE\left[||x_n^\d - x^\dag\|^2\right] \le 2 \EE\left[\|x_n^\d - x_n\|^2\right] 
+ 2 \EE\left[\|x_n - x^\dag\|^2\right] 
$$
From Lemma \ref{SVRG:lem5} and Lemma \ref{SVRG:lem6} it then follows  that 
\begin{align*}
\EE\left[\|x_n^\d - x^\dag\|^2\right] 
\le \frac{\|x_0-x^\dag\|^2 + \eta \|\la^\dag\|^2}{2\gamma_0 \eta (n+1)}  + C_0 n \d^2
\end{align*}
for all integers $n \ge 0$. With the choice $n_\d \sim \d^{-1}$ we thus obtain the desired 
convergence rate. 
\end{proof}

\section{\bf Convergence}\label{sect4}

In Theorem \ref{svrg.thm} we have established a convergence rate result for Algorithm \ref{alg:SVRG}  
when the $x_0$-minimal norm solution $x^\dag$ satisfies the source condition (\ref{sc}). This source 
condition might be too strong to be satisfied in applications. It is necessary to establish 
a convergence result on Algorithm \ref{alg:SVRG} without using any source condition on $x^\dag$. 
Considering the stability estimate given in Lemma \ref{SVRG:lem6}, we will achieve the goal by 
showing that $\EE[\|x_n - x^\dag\|^2] \to 0$ as $n \to \infty$. We will 
use a perturbation argument developed in \cite{Jin2010,Jin2011}. Namely, as an $x_0$-minimal 
norm solution, there holds $x^\dag - x_0 \in \mbox{Null}(A)^\perp = \overline{\mbox{Ran}(A^*)}$, 
and thus we may choose $\hat x_0 \in X$ as close to $x_0$ as we want such that 
$x^\dag - \hat x_0 \in \mbox{Ran}(A^*)$. We then define $\{\hat x_n, \hat x_{n,k}\}$ by Algorithm 
\ref{alg:SVRG} with exact data and with the initial guess $x_0$ replaced by $\hat x_0$. 
We will establish $\EE[\|x_n - x^\dag\|^2] \to 0$ as $n\to \infty$ by deriving estimates on $\EE[\|\hat x_n - x^\dag\|^2]$ and $\EE[\|x_n - \hat x_n\|^2]$. 

For $\EE[\|\hat x_n - x^\dag\|^2]$ we can apply the same argument in the proof of 
Lemma \ref{SVRG:lem5} to the sequence $\{\hat x_n\}$ to obtain the following result.

\begin{lemma}\label{SVRG:lem21}
    Consider the sequence $\{\hat x_n\}$ defined by Algorithm \ref{alg:SVRG} with exact data and with 
    $x_0$ replaced by $\hat x_0$, where $\hat x_0$ is chosen such that $x^\dag - \hat x_0 \in \emph{Ran}(A^*)$. Assume that $\gamma_0>0$ and $\gamma_1>0$ are chosen such that (\ref{SVRG.5}) holds. Then for any integer $n\ge 0$ there holds 
    \begin{align*}
        \EE[\|\hat x_n - x^\dag\|^2]
        \le \frac{\|\hat x_0 - x^\dag\|^2 + \eta \|\hat \la^\dag\|^2}{2 \gamma_0 \eta (n+1)}, 
    \end{align*}
    where $\eta>0$ is the constant defined in Lemma \ref{SVRG:lem5} and $\hat \la^\dag \in Y$ is such 
    that $x^\dag - \hat x_0 = A^* \hat \la^\dag$. 
\end{lemma}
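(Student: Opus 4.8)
The plan is to observe that the proof of Lemma \ref{SVRG:lem5} never uses the particular initial guess $x_0$ nor the source condition (\ref{sc}) beyond the single fact that $x^\dag - x_0 \in \mbox{Ran}(A^*)$. Since by hypothesis $x^\dag - \hat x_0 = A^* \hat \la^\dag$, the relation $x^\dag - \hat x_0 \in \mbox{Ran}(A^*)$ plays exactly the role of (\ref{sc}) for the shifted initial guess $\hat x_0$, so the entire chain of lemmas can be rerun verbatim with hats and with $\hat \la^\dag$ in place of $\la^\dag$, yielding the stated estimate.

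First I would introduce the dual sequence $\{\hat \la_n, \hat \la_{n,k}\}$ by running the analogue of Algorithm \ref{alg:SVRG2} with $\hat \la_0 = 0$ and $\hat x_0$ in place of $x_0$, so that $\hat x_n = \hat x_0 + A^* \hat \la_n$ and $\hat x_{n,k} = \hat x_0 + A^* \hat \la_{n,k}$ for all $n$ and $k$. This is legitimate because, exactly as for $\{x_n\}$, every iterate of Algorithm \ref{alg:SVRG} started at $\hat x_0$ remains in $\hat x_0 + \mbox{Ran}(A^*)$.

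Next I would verify that Lemmas \ref{SVRG.lem1} and \ref{SVRG.lem4} transfer to the hatted sequences. Lemma \ref{SVRG.lem1} is immediate, since its proof uses only the update rules of the algorithm together with $A x^\dag = y$ and never invokes the source condition; hence
\[
\EE[\|\hat x_{n+1} - x^\dag\|^2|\F_n] - \|\hat x_n - x^\dag\|^2
\le -\frac{2\gamma_1(1-\gamma_1 L)}{N}\sum_{k=0}^{m-1}\EE[\|A\hat x_{n,k} - y\|^2|\F_n]
- \Big(2\gamma_0 - \gamma_0^2\|A\|^2 - \tfrac{2m\gamma_1^2 L}{N}\Big)\|A\hat x_n - y\|^2.
\]
For Lemma \ref{SVRG.lem4} the source condition entered only through the identities $x_n - x^\dag = A^*(\la_n - \la^\dag)$ and $x_{n,k} - x^\dag = A^*(\la_{n,k} - \la^\dag)$; with the hatted quantities these read $\hat x_n - x^\dag = A^*(\hat \la_n - \hat \la^\dag)$ and $\hat x_{n,k} - x^\dag = A^*(\hat \la_{n,k} - \hat \la^\dag)$, which hold by construction, so the identical computation produces the same inequality with $\hat \la^\dag, \hat \la_n, \hat x_n, \hat x_{n,k}$ replacing their unhatted counterparts.

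Finally I would form the Lyapunov quantity $\hat\Delta_n := \|\hat x_n - x^\dag\|^2 + \eta \|\hat \la_n - \hat \la^\dag\|^2$ with the same $\eta$ as in Lemma \ref{SVRG:lem5}, noting that $\eta$ depends only on $\gamma_0, \gamma_1, \|A\|, L$ and $m/N$, all of which are unchanged. Combining the two transferred inequalities gives $\EE[\hat\Delta_{n+1}] - \EE[\hat\Delta_n] \le -2\gamma_0\eta\,\EE[\|\hat x_n - x^\dag\|^2]$; summing over $n$ and using that $\EE[\|\hat x_n - x^\dag\|^2]$ is monotonically decreasing (a consequence of the transferred Lemma \ref{SVRG.lem1} together with (\ref{SVRG.5})) yields $2\gamma_0\eta(n+1)\EE[\|\hat x_n - x^\dag\|^2] \le \hat\Delta_0 = \|\hat x_0 - x^\dag\|^2 + \eta\|\hat\la^\dag\|^2$, which is the claim. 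There is no genuine obstacle here; the only point requiring care is the bookkeeping to confirm that the source condition was used solely via $x^\dag - x_0 \in \mbox{Ran}(A^*)$, so that the argument is insensitive to shifting the initial guess from $x_0$ to $\hat x_0$.
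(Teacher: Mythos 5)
Your proposal is correct and follows essentially the same route as the paper, which simply notes that the argument of Lemma \ref{SVRG:lem5} (via the hatted analogues of Lemmas \ref{SVRG.lem1} and \ref{SVRG.lem4} and the Lyapunov quantity $\|\hat x_n - x^\dag\|^2 + \eta\|\hat\la_n - \hat\la^\dag\|^2$) applies verbatim once $x^\dag - \hat x_0 = A^*\hat\la^\dag$ replaces the source condition. Your careful bookkeeping that the source condition enters only through $x^\dag - x_0 \in \mathrm{Ran}(A^*)$ is exactly the point the paper leaves implicit.
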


We next derive estimate on $\EE[\|x_n - \hat x_n\|^2]$ in terms of $\|x_0 - \hat x_0\|^2$.  
We have the following stability result on $x_n$ with respect to the perturbation of the initial 
guess $x_0$.

\begin{lemma}\label{SVRG:lem22}
    Assume that $\gamma_0>0$ and $\gamma_1>0$ are chosen such that (\ref{SVRG.5}) is satisfied. Then there holds 
    $$
         \EE[\|x_n - \hat x_n\|^2] \le \|x_0 - \hat x_0\|^2
    $$ 
    for all integers $n\ge 0$.
\end{lemma}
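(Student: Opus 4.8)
The plan is to follow the proof of Lemma \ref{SVRG:lem6} almost verbatim, exploiting the fact that both $\{x_n\}$ and $\{\hat x_n\}$ are generated by Algorithm \ref{alg:SVRG} with the \emph{same} exact data $y$ and the \emph{same} sample path $\{i_{n,k}\}$, differing only in the initial guess. First I would set $w_n := x_n - \hat x_n$ and $w_{n,k} := x_{n,k} - \hat x_{n,k}$. The crucial observation is that subtracting the two iterations cancels the data vector entirely: since the full-gradient terms satisfy $A^*(A x_n - y) - A^*(A \hat x_n - y) = A^* A w_n$, the $\gamma_0$-step yields $w_{n,0} = (I - \gamma_0 A^* A) w_n$, while the inner update yields
\[
w_{n,k+1} = w_{n,k} - \gamma_1\Big(A_{i_{n,k}}^* A_{i_{n,k}}(w_{n,k} - w_n) + \tfrac{1}{N} A^* A w_n\Big).
\]
These are precisely the recursions for $u_n^\d = x_n^\d - x_n$ appearing in the proof of Lemma \ref{SVRG:lem6}, with the data discrepancy $y^\d - y$ replaced by $0$.

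Consequently the key step is to transcribe the estimates of Lemma \ref{SVRG:lem6} with every $\d$-term dropped. The $\gamma_0$-step gives $\|w_{n,0}\|^2 - \|w_n\|^2 \le -(2\gamma_0 - \gamma_0^2\|A\|^2)\|A w_n\|^2$, and the conditional estimate of the inner loop (using $\|a+b\|^2 \le 2\|a\|^2 + 2\|b\|^2$, the Cauchy--Schwarz inequality, and the definition of $L$) becomes
\[
\EE\big[\|w_{n,k+1}\|^2 \,\big|\, \F_{n,k}\big] - \|w_{n,k}\|^2 \le -\frac{2\gamma_1(1-\gamma_1 L)}{N}\|A w_{n,k}\|^2 + \frac{2\gamma_1^2 L}{N}\|A w_n\|^2.
\]
Summing over $k = 0, \dots, m-1$ via the tower property and adding the $\gamma_0$-step, I would obtain
\[
\EE\big[\|w_{n+1}\|^2 \,\big|\, \F_n\big] - \|w_n\|^2 \le -\Big(2\gamma_0 - \gamma_0^2\|A\|^2 - \frac{2m\gamma_1^2 L}{N}\Big)\|A w_n\|^2 - \frac{2\gamma_1(1-\gamma_1 L)}{N}\sum_{k=0}^{m-1}\EE\big[\|A w_{n,k}\|^2 \,\big|\, \F_n\big].
\]

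Finally, under condition (\ref{SVRG.5}) both coefficients on the right are nonnegative, so the right-hand side is $\le 0$ and hence $\EE[\|w_{n+1}\|^2 \mid \F_n] \le \|w_n\|^2$. Taking the full expectation and iterating down to $n = 0$ gives $\EE[\|w_n\|^2] \le \|w_0\|^2 = \|x_0 - \hat x_0\|^2$, which is the assertion. I do not expect a genuine obstacle here: the entire content is the observation that subtracting two exact-data runs cancels $y$ and thereby removes the $\d^2$ source term that caused the bound in Lemma \ref{SVRG:lem6} to grow linearly in $n$. Once this is noted, the algebra is identical to (and no harder than) that lemma, and the only care needed is the bookkeeping confirming the cancellation and that (\ref{SVRG.5}) renders the two coefficients nonnegative.
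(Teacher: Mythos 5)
Your proposal is correct and follows essentially the same route as the paper: the paper also sets $z_n := x_n - \hat x_n$, observes that the exact data cancels so that $z_{n,0} = (I-\gamma_0 A^*A)z_n$ and $z_{n,k+1} = z_{n,k} - \gamma_1 A_{i_{n,k}}^*A_{i_{n,k}}(z_{n,k}-z_n) - \frac{\gamma_1}{N}A^*Az_n$, and then reruns the estimates of Lemma \ref{SVRG:lem6} without the $\d$-terms to conclude $\EE[\|z_{n+1}\|^2|\F_n] \le \|z_n\|^2$ under (\ref{SVRG.5}). The only cosmetic difference is that the paper discards the nonpositive term $-\frac{2\gamma_1(1-\gamma_1 L)}{N}\|Az_{n,k}\|^2$ at the inner-loop stage rather than carrying it to the end.
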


\begin{proof}
    Let $z_n:=x_n - \hat x_n$ and $z_{n,k} = x_{n,k} - \hat x_{n,k}$. Then, by the definition of 
    $\{x_n, x_{n,k}\}$ and $\{\hat x_n, \hat x_{n,k}\}$ we have 
    \begin{align*}
        z_{n,0} = z_n - \gamma_0 A^*A z_n
    \end{align*}
    and 
    \begin{align*}
        z_{n, k+1} = z_{n,k} - \gamma_1 A_{i_{n,k}}^* A_{i_{n,k}}(z_{n,k} - z_n) - \frac{\gamma_1}{N} A^*A z_n
    \end{align*}
    for all $n = 0, 1, \cdots$ and $k = 0, \cdots, m-1$. Therefore
    \begin{align}\label{svrg.211}
        \|z_{n,0}\|^2 & = \|z_n\|^2 - 2 \gamma_0\l z_n, A^*A z_n\r + \gamma_0^2 \|A^*Az_n\|^2 \nonumber \\
        & = \|z_n\|^2 - 2\gamma_0 \|Az_n\|^2 + \gamma_0^2 \|A^*Az_n\|^2 \nonumber \\
        & \le \|z_n\|^2 - (2\gamma_0-\gamma_0^2\|A\|^2) \|Az_n\|^2
    \end{align}
    and 
    \begin{align*}
        \|z_{n,k+1}\|^2 - \|z_{n,k}\|^2 
        & = -2\gamma_1 \left\l z_{n,k}, A_{i_{n,k}}^*A_{i_{n,k}}(z_{n,k}-z_n) + \frac{1}{N}A^*Az_n\right\r\\
        & \quad\, +\gamma_1^2 \left\|A_{i_{n,k}}^*A_{i_{n,k}}(z_{n,k}-z_n) + \frac{1}{N}A^*Az_n\right\|^2.
    \end{align*}
    Consequently
    \begin{align*}
        \EE[\|z_{n,k+1}\|^2|\F_{n,k}] - \|z_{n,k}\|^2
        & =  -\frac{2\gamma_1}{N}\sum_{i=1}^N \left\l z_{n,k}, A_i^*A_i(z_{n,k}-z_n) + \frac{1}{N}A^*Az_n\right\r  \displaybreak[0]\\
        & \quad\, + \frac{\gamma_1^2}{N} \sum_{i=1}^N \left\|A_i^*A_i(z_{n,k}-z_n) + \frac{1}{N}A^*Az_n\right\|^2  \displaybreak[0]\\
        & =  -\frac{2\gamma_1}{N} \left\l z_{n,k}, A^* A z_{n,k} \right\r\\
        & \quad\, + \frac{\gamma_1^2}{N} \sum_{i=1}^N \left\|A_i^*A_i (z_{n,k}-z_n) + \frac{1}{N}A^*Az_n\right\|^2.
    \end{align*}
    By using the inequality $\|a+b\|^2 \le 2(\|a\|^2 + \|b\|^2)$, we further have 
    \begin{align*}
        \EE[\|z_{n,k+1}\|^2|\F_{n,k}] - \|z_{n,k}\|^2
        & \le -\frac{2\gamma_1}{N} \|A z_{n,k}\|^2 
        + \frac{2\gamma_1^2}{N} \sum_{i=1}^N \|A_i^*A_iz_{n,k}\|^2  \displaybreak[0]\\
        & \quad\, + \frac{2\gamma_1^2}{N} \sum_{i=1}^N \left\|A_i^*A_i z_n - \frac{1}{N}A^*A z_n\right\|^2 \displaybreak[0]\\
        & = - \frac{2\gamma_1}{N} \|A z_{n,k}\|^2 
        + \frac{2\gamma_1^2}{N} \sum_{i=1}^N \|A_i^*A_iz_{n,k}\|^2  \displaybreak[0]\\
        & \quad\, + \frac{2\gamma_1^2}{N} \sum_{i=1}^N \|A_i^*A_i z_n\|^2 
        + \frac{2\gamma_1^2}{N^2} \|A^*A z_n\|^2  \displaybreak[0]\\
        & \quad \, - \frac{4\gamma_1^2}{N^2} \sum_{i=1}^N \l A_i^*A_i z_n, A^*A z_n \r \displaybreak[0]\\
        & \le -\frac{2\gamma_1}{N} \|A z_{n,k}\|^2 
        + \frac{2\gamma_1^2}{N} \sum_{i=1}^N \|A_i\|^2 \|A_iz_{n,k}\|^2  \displaybreak[0]\\
        & \quad\, + \frac{2\gamma_1^2}{N} \sum_{i=1}^N \|A_i\|^2 \|A_i z_n\|^2 
        - \frac{2\gamma_1^2}{N^2} \|A^*A z_n\|^2.
    \end{align*}
    By the definition of $L$ and $1-\gamma_1 L>0$ we then have 
    \begin{align*}
        &\EE[\|z_{n,k+1}\|^2|\F_{n,k}] - \|z_{n,k}\|^2 \\
        & \le - \frac{2\gamma_1(1-\gamma_1 L)}{N} \|A z_{n,k}\|^2 
        + \frac{2\gamma_1^2 L}{N} \|A z_n\|^2
        \le \frac{2\gamma_1^2 L}{N} \|A z_n\|^2.
    \end{align*}
    Therefore
     \begin{align*}
        \EE[\|z_{n,k+1}\|^2|\F_n] - \EE[\|z_{n,k}\|^2|\F_n]
        \le \frac{2\gamma_1^2 L}{N} \|A z_n\|^2.
    \end{align*}
    Summing over $k$ from $k =0$ to $k = m-1$ gives
    \begin{align*}
        \EE[\|z_{n+1}\|^2|\F_n] - \|z_{n,0}\|^2
        \le \frac{2m\gamma_1^2 L}{N} \|A z_n\|^2.
    \end{align*}
    Combining this with (\ref{svrg.211}) gives 
    \begin{align*}
        \EE[\|z_{n+1}\|^2|\F_n] - \|z_n \|^2
        \le -\left(2\gamma_0 - \gamma_0^2 \|A\|^2 -\frac{2m\gamma_1^2 L}{N}\right) \|A z_n\|^2\le 0
    \end{align*}
    which, by taking the full expectation, implies $\EE[\|z_{n+1}\|^2] \le \EE[\|z_n\|^2]$ for all 
    integers $n \ge 0$. By recursively using this inequality we thus obtain $\EE[\|z_n\|^2] \le \|z_0\|^2 = \|x_0 - \hat x_0\|^2$. The proof is complete. 
\end{proof}
 
Based on Lemma \ref{SVRG:lem21} and Lemma \ref{SVRG:lem22}, we can now prove the convergence of 
Algorithm \ref{alg:SVRG} with exact data. 

\begin{theorem}\label{SVRG:thm6}
    Consider the sequence $\{x_n\}$ defined by Algorithm \ref{alg:SVRG} with exact data. Assume that 
    $\gamma_0>0$ and $\gamma_1>0$ are chosen such that (\ref{SVRG.5}) holds. Let $x^\dag$ denote the 
    unique $x_0$-minimal norm solution of (\ref{SVRG.1}). Then 
    $$
        \lim_{n\to \infty} \EE\left[\|x_n - x^\dag\|^2\right] = 0.
    $$
\end{theorem}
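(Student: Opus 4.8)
The plan is to use the perturbation/density argument sketched just before Lemma~\ref{SVRG:lem21}, coupling the two lemmas through a triangle inequality. First I would fix an arbitrary $\ep>0$. Since $x^\dag$ is the $x_0$-minimal norm solution, there holds $x^\dag - x_0 \in \mbox{Null}(A)^\perp = \overline{\mbox{Ran}(A^*)}$, so I can choose $\hat x_0 \in X$ with $\|x_0 - \hat x_0\| < \ep$ and $x^\dag - \hat x_0 \in \mbox{Ran}(A^*)$; fix such an $\hat x_0$ together with an element $\hat\la^\dag \in Y$ satisfying $x^\dag - \hat x_0 = A^* \hat\la^\dag$. Let $\{\hat x_n\}$ be the sequence produced by Algorithm~\ref{alg:SVRG} with exact data and initial guess $\hat x_0$, driven by the same sample path $\{i_{n,k}\}$ as $\{x_n\}$, so that the coupling in Lemma~\ref{SVRG:lem22} applies.

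Next I would split $\EE[\|x_n - x^\dag\|^2]$ via the elementary bound $\|a+b\|^2 \le 2\|a\|^2 + 2\|b\|^2$:
\begin{align*}
\EE[\|x_n - x^\dag\|^2] \le 2\,\EE[\|x_n - \hat x_n\|^2] + 2\,\EE[\|\hat x_n - x^\dag\|^2].
\end{align*}
Lemma~\ref{SVRG:lem22} bounds the first term by $2\|x_0 - \hat x_0\|^2 < 2\ep^2$, uniformly in $n$. Lemma~\ref{SVRG:lem21} bounds the second term by
\begin{align*}
\frac{\|\hat x_0 - x^\dag\|^2 + \eta \|\hat\la^\dag\|^2}{\gamma_0 \eta (n+1)},
\end{align*}
whose numerator is a fixed finite constant once $\hat x_0$ has been selected. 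Taking $\limsup_{n\to\infty}$, the transient second contribution vanishes and I obtain $\limsup_{n\to\infty} \EE[\|x_n - x^\dag\|^2] \le 2\ep^2$. Since $\ep>0$ was arbitrary, letting $\ep \to 0$ yields $\limsup_{n\to\infty}\EE[\|x_n - x^\dag\|^2] = 0$, which (with nonnegativity) gives the stated limit.

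The only place where care is genuinely needed is the order of the two limiting operations. The numerator supplied by Lemma~\ref{SVRG:lem21} contains $\|\hat\la^\dag\|$, and this may blow up as $\hat x_0 \to x_0$, precisely because $x^\dag - x_0$ is only guaranteed to lie in $\overline{\mbox{Ran}(A^*)}$ rather than in $\mbox{Ran}(A^*)$ itself. This is harmless provided one sends $n \to \infty$ \emph{first}, for each fixed $\hat x_0$, so that the $O(1/n)$ decay extinguishes the (finite) constant before $\ep$ is sent to zero; the persistent bias $2\|x_0 - \hat x_0\|^2$ is then controlled separately and made arbitrarily small by the choice of $\hat x_0$. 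I expect this interchange-of-limits bookkeeping to be the main subtlety, while the two constituent estimates are already supplied by Lemmas~\ref{SVRG:lem21} and~\ref{SVRG:lem22}. No source condition on $x^\dag$ enters, exactly as desired.
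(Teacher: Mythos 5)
Your proposal is correct and follows essentially the same route as the paper: fix $\ep>0$, pick $\hat x_0$ with $x^\dag-\hat x_0\in\mathrm{Ran}(A^*)$ and $\|x_0-\hat x_0\|<\ep$, combine Lemma \ref{SVRG:lem22} (uniform-in-$n$ stability under perturbation of the initial guess) with Lemma \ref{SVRG:lem21} (the $O(1/(n+1))$ decay), take $\limsup_{n\to\infty}$, and then let $\ep\to 0$. Your explicit remark about the order of the two limits (sending $n\to\infty$ before $\ep\to 0$, since $\|\hat\la^\dag\|$ may blow up as $\hat x_0\to x_0$) is exactly the point the paper's argument implicitly relies on.
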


\begin{proof}
Since $x^\dag$ is the $x_0$-minimal norm solution of (\ref{SVRG.1}), there holds $x^\dag - x_0 \in \overline{\mbox{Ran}(A^*)}$. Thus for any $\ep>0$ we can find $\hat x_0 \in X$ 
such that $\|x_0-\hat x_0\| < \ep$ and $x^\dag - \hat x_0 \in \mbox{Ran}(A^*)$. Define $\{\hat x_n\}$
by Algorithm \ref{alg:SVRG} with exact data and with $x_0$ replaced by $\hat x_0$. Then from 
Lemma \ref{SVRG:lem22} it follows that 
$$
\EE\left[\|x_n - \hat x_n\|^2\right] \le \|x_0 - \hat x_0\|^2 < \ep^2. 
$$
Moreover, from Lemma \ref{SVRG:lem21} we have
$$
\EE\left[\|\hat x_n - x^\dag\|^2\right] \le C (n+1)^{-1}
$$
for some constant $C$ which may depend on $\ep$ but is independent of $n$. Consequently
\begin{align*}
\EE[\|x_n - x^\dag\|^2] 
& \le \EE\left[(\|x_n - \hat x_n\| + \|\hat x_n - x^\dag\|)^2\right] \\
& \le 2\EE\left[\|x_n - \hat x_n\|^2 + \|\hat x_n - x^\dag\|^2\right] \\
& \le 2 \ep^2 + 2C (n+1)^{-1}.  
\end{align*}
Therefore 
$$
\limsup_{n\to \infty} \EE\left[\|x_n - x^\dag\|^2\right] \le 2\ep^2. 
$$
Since $\ep>0$ is arbitrary, we must have $\EE\left[\|x_n - x^\dag\|^2\right] \to 0$ 
as $n \to \infty$. 
\end{proof}

By using Theorem \ref{SVRG:thm6} and Lemma \ref{SVRG:lem6} we are now ready to prove the main 
convergence result on Algorithm \ref{alg:SVRG} under an {\it a priori} stopping rule. 

\begin{theorem}\label{SVRG.thm4}
Consider Algorithm \ref{alg:SVRG}, where $\gamma_0>0$ and $\gamma_1>0$ are chosen such that 
(\ref{SVRG.5}) holds. Let $x^\dag$ denote the unique $x_0$-minimal norm solution of (\ref{SVRG.1}). 
Then for the integer $n_\d$ chosen such that $n_\d \to \infty$ and $\d^2 n_\d \to 0$ as $\d \to 0$ 
there holds 
$$
\EE[\|x_{n_\d}^\d - x^\dag\|^2] \to 0 \quad \mbox{ as } \d \to 0.
$$
\end{theorem}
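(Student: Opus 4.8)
The plan is to combine the stability estimate from Lemma \ref{SVRG:lem6} with the noise-free convergence from Theorem \ref{SVRG:thm6} via a bias-variance type splitting. Specifically, for each fixed $\delta>0$ and each integer $n$, introduce the noise-free iterate $x_n$ produced by Algorithm \ref{alg:SVRG} run on the \emph{exact} data $y$ with the same sample path $\{i_{n,k}\}$, and write
\begin{align*}
\EE[\|x_{n_\d}^\d - x^\dag\|^2]
\le 2\,\EE[\|x_{n_\d}^\d - x_{n_\d}\|^2] + 2\,\EE[\|x_{n_\d} - x^\dag\|^2],
\end{align*}
exactly as in the proof of Theorem \ref{svrg.thm}. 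The first term on the right is the propagated data noise (the ``variance''), controlled by Lemma \ref{SVRG:lem6}; the second is the approximation error of the exact-data method (the ``bias''), controlled by Theorem \ref{SVRG:thm6}.

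First I would apply Lemma \ref{SVRG:lem6} to bound the noise term by $2C_0 n_\d \d^2$, which tends to $0$ under the hypothesis $\d^2 n_\d \to 0$. Next I would invoke Theorem \ref{SVRG:thm6}, which asserts that $\EE[\|x_n - x^\dag\|^2] \to 0$ as $n \to \infty$ for the exact-data iterates, without any source condition. The subtlety here is that $x_{n_\d}$ depends on $\d$ only through the index $n_\d$, so the bias term is $\EE[\|x_{n_\d} - x^\dag\|^2]$ evaluated along the diverging sequence $n_\d$. I would make this precise by writing $\psi(n):=\EE[\|x_n - x^\dag\|^2]$, noting $\psi(n)\to 0$ by Theorem \ref{SVRG:thm6}, and observing that the composition $\psi(n_\d)\to 0$ as $\d\to 0$ precisely because $n_\d \to \infty$.

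Combining the two bounds gives
\begin{align*}
\EE[\|x_{n_\d}^\d - x^\dag\|^2] \le 2\,\psi(n_\d) + 2 C_0 n_\d \d^2,
\end{align*}
and letting $\d\to 0$ sends the first term to $0$ (since $n_\d\to\infty$) and the second to $0$ (since $\d^2 n_\d\to 0$), which yields the claim. The only point requiring a little care is ensuring that the noise-free sequence $x_n$ used in the splitting is genuinely the pathwise counterpart of $x_n^\d$ with the identical random indices, so that the quantity $\EE[\|x_{n_\d}^\d - x_{n_\d}\|^2]$ is exactly the object estimated in Lemma \ref{SVRG:lem6}; this is guaranteed by the construction described at the start of Section \ref{sect2}. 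I do not expect any serious obstacle, since both ingredients are already in hand; the main conceptual step is simply recognizing that the two a priori conditions $n_\d\to\infty$ and $\d^2 n_\d\to 0$ are exactly what is needed to kill the bias and the variance respectively, and that these two requirements are jointly satisfiable (e.g. $n_\d \sim \d^{-1}$).
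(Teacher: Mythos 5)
Your proposal is correct and follows essentially the same route as the paper: the same splitting $\EE[\|x_{n_\d}^\d - x^\dag\|^2] \le 2\,\EE[\|x_{n_\d}^\d - x_{n_\d}\|^2] + 2\,\EE[\|x_{n_\d} - x^\dag\|^2]$, with Lemma \ref{SVRG:lem6} killing the first term via $\d^2 n_\d \to 0$ and Theorem \ref{SVRG:thm6} killing the second via $n_\d \to \infty$. Your added remarks about the pathwise coupling of $x_n$ and $x_n^\d$ and the composition $\psi(n_\d)\to 0$ are correct clarifications of points the paper leaves implicit.
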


\begin{proof}
We first have 
\begin{align*}
\EE\left[\|x_{n_\d}^\d - x^\dag\|^2\right] 
& \le 2 \EE\left[\|x_{n_\d}^\d - x_{n_\d}\|^2\right] 
+ 2 \EE\left[\|x_{n_\d} - x^\dag\|^2\right].
\end{align*}
Since $n_\d\to \infty$, we may use Theorem \ref{SVRG:thm6} to obtain 
$$
\EE\left[\|x_{n_\d} - x^\dag\|^2\right] \to 0 \quad \mbox{ as } \d \to 0. 
$$
By using Lemma \ref{SVRG:lem6} and $\d^2 n_\d \to 0$, we also have 
$$
\EE\left[\|x_{n_\d}^\d - x_{n_\d}\|^2\right]
\le C_0 \d^2 n_\d \to 0 \quad \mbox{ as }\d\to 0. 
$$
Therefore $\EE[\|x_{n_\d}^\d - x^\dag\|^2] \to 0$ as $\d \to 0$. 
\end{proof}

\section{\bf The discrepancy principle}\label{sect5}

The convergence results on Algorithm \ref{alg:SVRG} given in Theorem \ref{svrg.thm} and Theorem \ref{SVRG.thm4} are established under {\it a priori} stopping rules. In applications, we usually expect to terminate the iteration by {\it a posteriori} rules. Note that $r_n^\d:=A x_n^\d - y^\d$ is involved in the algorithm in every epoch, it is natural to consider terminating the iteration by the discrepancy principle which determines $n_\d$ to be the first integer such that $\|r_{n_\d}^\d\| \le \tau \d$, where $\tau>1$ is a given number. Incorporating the discrepancy principle into Algorithm \ref{alg:SVRG} leads to the following algorithm. 

\begin{algorithm}
        \caption{SVRG with the discrepancy principle}\label{alg:SVRG-DP}
	\begin{algorithmic}[0]	
		\STATE \textbf{input:} update frequency $m$, initial guess $x_0 \in X$, numbers $\tau>1$, $\gamma_0>0$, 
         $\gamma_1>0$. 
		
		\STATE {\bf for} $n = 0, 1, \cdots$ {\bf do}  
  
		\STATE \qquad Calculate $r_n^\d := A x_n^\d - y^\d$ \\[1ex]
  
        \STATE \qquad Set $\displaystyle{\mu_n := \left\{\begin{array}{lll}
        1  & \mbox{ if } \|r_n^\d\| > \tau \d\\
        0  & \mbox{ if } \|r_n^\d\| \le \tau \d;
        \end{array} \right.}$ \\[1ex]
        
		\STATE \qquad $g_n^\d = A^* r_n^\d$; \\[1ex]
  
        \STATE \qquad $x_{n,0}^\d  = x_n^\d  - \gamma_0 \mu_n g_n^\d$; \\[1ex]
		
		\STATE \qquad {\bf for } $k = 0, \cdots, m-1$ {\bf do} 
		
		\STATE \qquad \qquad pick $i_{n,k} \in \{1, \cdots, N\}$ randomly via uniform distribution; \\[1ex]
		
		\STATE \qquad \qquad $\displaystyle{g_{n,k}^\d = A_{i_{n,k}}^*A_{i_{n,k}} (x_{n,k}^\d - x_n^\d) + \frac{1}{N} g_n^\d}$;\\[1ex] 
		
		\STATE \qquad \qquad  $\displaystyle{x_{n,k+1}^\d = x_{n,k}^\d - \gamma_1 \mu_n g_{n,k}^\d}$; \\[1ex]
		
		\STATE \qquad {\bf end for} 
		
		\STATE \qquad $x_{n+1}^\d = x_{n,m}^\d$; 
		
		\STATE {\bf end} 
	\end{algorithmic}
\end{algorithm}

Algorithm \ref{alg:SVRG-DP} is formulated in the way that it incorporates the discrepancy principle 
to define an infinite sequence $\{x_n^\d\}$, which is convenient for analysis below. In numerical 
simulations, the iteration actually is terminated as long as $\|r_n^\d\| \le \tau \d$ because the 
iterates are no longer updated. It should be highlighted that the stopping index depends crucially 
on the sample path and thus is a random integer. Note also that the step sizes $\gamma_0 \mu_n$ and 
$\gamma_1 \mu_n$ in Algorithm \ref{alg:SVRG-DP} are random numbers; this sharply contrasts to 
Algorithm \ref{alg:SVRG} where the step size $\gamma_0$ and $\gamma_1$ are deterministic. The following result shows that the discrepancy principle can terminate the SVRG method in finite many steps almost surely. 

\begin{proposition}\label{prop:DP}
Consider Algorithm \ref{alg:SVRG-DP}. If $\tau>1$, $\gamma_0>0$ and $\gamma_1>0$ are chosen such that $0<\gamma_1<1/L$ and
$$
c_1 := 2\gamma_0 - \frac{2\gamma_0}{\tau} - \gamma_0^2 \|A\|^2 - \frac{2m \gamma_1^2L}{N} 
- \frac{m \gamma_1}{2 N (1- \gamma_1 L) \tau^2} >0,
$$
then 
\begin{align}\label{RBCD.DP1}
\EE[\|x_{n+1}^\d - x^\dag\|^2] \le \EE[\|x_n^\d - x^\dag\|^2] 
- c_1 \EE\left[\mu_n \|A x_n^\d - y^\d\|^2\right] 
\end{align}
for all integers $n\ge 0$. Moreover, Algorithm \ref{alg:SVRG-DP} 
must terminate in finite many steps almost surely. 
\end{proposition}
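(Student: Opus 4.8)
The plan is to establish the one-step decrease (\ref{RBCD.DP1}) first and then deduce almost sure finite termination from it by a summability argument. For the inequality I would follow the template of Lemma \ref{SVRG:lem6} and Lemma \ref{SVRG.lem1}, but carried out directly with the noisy data $y^\d$ and with $x^\dag$ as the reference point (recall $A x^\dag = y$). The only structural novelty is the $\F_n$-measurable random factor $\mu_n \in \{0,1\}$ appearing in the step sizes. When $\mu_n = 0$ the iterates freeze, $x_{n+1}^\d = x_n^\d$, and both sides of (\ref{RBCD.DP1}) vanish, so the inequality is trivial; hence it suffices to treat $\mu_n = 1$, where the full SVRG step is performed. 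I would expand $\|x_{n,0}^\d - x^\dag\|^2 - \|x_n^\d - x^\dag\|^2$ using $x_{n,0}^\d = x_n^\d - \gamma_0 A^*(A x_n^\d - y^\d)$, bound $\|A^*(\cdot)\|^2 \le \|A\|^2\|\cdot\|^2$, and use $\langle A x_n^\d - y^\d, A x_n^\d - y\rangle \ge \|A x_n^\d - y^\d\|^2 - \d\|A x_n^\d - y^\d\|$ via $\|y^\d - y\| \le \d$. The crucial point is that on $\{\mu_n = 1\}$ the discrepancy condition gives $\d < \|A x_n^\d - y^\d\|/\tau$, so the cross term $2\gamma_0 \d\|A x_n^\d - y^\d\|$ is absorbed as $(2\gamma_0/\tau)\|A x_n^\d - y^\d\|^2$.

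For the inner loop I would mimic Lemma \ref{SVRG.lem1}: take the conditional expectation on $\F_{n,k}$, use $\EE[g_{n,k}^\d | \F_{n,k}] = \frac{1}{N} A^*(A x_{n,k}^\d - y^\d)$, and bound the second moment $\EE[\|g_{n,k}^\d\|^2 | \F_{n,k}]$ by $\frac{2L}{N}\|A x_{n,k}^\d - y^\d\|^2 + \frac{2L}{N}\|A x_n^\d - y^\d\|^2$ exactly as in the variance computation there (writing $A_i^* A_i(x_{n,k}^\d - x_n^\d)$ as a difference of residuals, applying $\|a+b\|^2 \le 2\|a\|^2 + 2\|b\|^2$, and discarding the favourable $-\frac{2}{N^2}\|A^*(\cdot)\|^2$ term). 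The residual $\d$-term is handled by Young's inequality and then again by $\d^2 < \|A x_n^\d - y^\d\|^2/\tau^2$, producing the contribution $\frac{m\gamma_1}{2N(1-\gamma_1 L)\tau^2}\|A x_n^\d - y^\d\|^2$. Summing over $k = 0, \dots, m-1$ and combining with the $x_{n,0}^\d$ estimate yields, after taking full expectation, precisely (\ref{RBCD.DP1}) with the stated $c_1$.

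For almost sure termination I would sum (\ref{RBCD.DP1}) over $n$: telescoping gives $c_1 \sum_{n=0}^\infty \EE[\mu_n \|A x_n^\d - y^\d\|^2] \le \|x_0 - x^\dag\|^2 < \infty$, so the nonnegative variable $S := \sum_{n=0}^\infty \mu_n \|A x_n^\d - y^\d\|^2$ has finite expectation and is therefore finite almost surely. On the event that the method never terminates, every $\mu_n = 1$, hence $\|A x_n^\d - y^\d\| > \tau\d$ for all $n$, so each summand exceeds $\tau^2 \d^2 > 0$ and $S = \infty$ there. That event is thus contained in the null set $\{S = \infty\}$, so it has probability zero and termination occurs in finitely many steps almost surely.

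The main obstacle is the bookkeeping in the descent inequality: obtaining the second-moment bound on $g_{n,k}^\d$ with the clean constant $2L/N$, and correctly tracking how the discrepancy condition converts each noise contribution (the term $2\gamma_0 \d\|A x_n^\d - y^\d\|$ at the outer step and the $\d^2$ term produced by Young's inequality at each inner step) into multiples of $\|A x_n^\d - y^\d\|^2$, so that all pieces assemble into exactly the stated $c_1$. The termination argument is then immediate once summability is in hand.
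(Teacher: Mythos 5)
Your proposal is correct and follows essentially the same route as the paper: the one-step inequality is obtained by rerunning the arguments of Lemmas \ref{SVRG:lem6} and \ref{SVRG.lem1} with noisy data and reference point $x^\dag$, converting the noise terms $2\gamma_0\d\|Ax_n^\d-y^\d\|$ and the Young-inequality $\d^2$ term into multiples of $\mu_n\|Ax_n^\d-y^\d\|^2$ via $\mu_n\d\le \frac{\mu_n}{\tau}\|Ax_n^\d-y^\d\|$, exactly as the paper does. Your termination argument (finite expectation of $\sum_n \mu_n\|Ax_n^\d-y^\d\|^2$ forces almost sure finiteness, contradicting the lower bound $\tau^2\d^2$ per term on the non-termination event) is the same summability argument the paper uses, merely phrased through the random variable $S$ rather than by directly bounding $\P(\mathcal{E})$.
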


\begin{proof}
By following the proof of Lemma \ref{SVRG.lem1} with minor modifications we can obtain 
\begin{align*}
\EE[\|x_{n+1}^\d - x^\dag\|^2|\F_n] - \|x_{n,0}^\d - x^\dag\|^2  
& \le - \frac{2\mu_n\gamma_1 (1- \mu_n \gamma_1 L)}{N} \sum_{k=0}^{m-1} \EE[\|A x_{n,k}^d - y^\d\|^2|\F_n] \\
& \quad \, +\frac{2\mu_n \gamma_1}{N} \d \sum_{k=0}^{m-1} \EE[\|A x_{n,k}^\d - y^\d\||\F_n]  \\
& \quad \, + \frac{2m\mu_n \gamma_1^2 L}{N} \|A x_n^\d - y^\d\|^2 \\
& \le \frac{m\gamma_1 \mu_n \d^2}{2N(1-\mu_n \gamma_1 L)} + \frac{2m\mu_n \gamma_1^2 L}{N} \|A x_n^\d - y^\d\|^2 \\
& = \frac{m\gamma_1 \mu_n \d^2}{2N(1-\gamma_1 L)} + \frac{2m\mu_n \gamma_1^2 L}{N} \|A x_n^\d - y^\d\|^2
\end{align*}
and 
\begin{align*}
\|x_{n,0}^\d - x^\dag\|^2 - \|x_n^\d - x^\dag\|^2 
& \le - (2\gamma_0 - \gamma_0^2\|A\|^2) \mu_n \|A x_n^\d - y^\d\|^2 \\
& \quad \, + 2 \mu_0 \gamma_0 \d \|A x_n^\d - y^\d\|
\end{align*}
By the definition of $\mu_n$ we have $\mu_n \d \le \frac{\mu_n}{\tau} \|A x_n^\d - y^\d\|$. Therefore 
\begin{align*}
\EE[\|x_{n+1}^\d - x^\dag\|^2|\F_n] - \|x_n^\d - x^\dag\|^2
\le - c_1 \mu_n \|A x_n^\d - y^\d\|^2
\end{align*}
Taking the full expectation gives (\ref{RBCD.DP1}). 

Next we show that the method must terminate after finite many steps almost surely. To see this, consider the event
$$
{\mathcal E} := \left\{\|A x_n^\d - y^\d \|>\tau \d \mbox{ for all integers } n \ge 0\right\}
$$
It suffices to show $\P({\mathcal E}) = 0$. By virtue of (\ref{RBCD.DP1}) we have 
\begin{align*}
c_1 \EE\left[\mu_n \|A x_n^\d - y^\d\|^2\right] \le \EE[\|x_n^\d - x^\dag\|^2] - \EE[\|x_{n+1}^\d - x^\dag\|^2] 
\end{align*}
and hence for any integer $l \ge 0$ that 
\begin{align}\label{RBCD.DP2}
c_1 \sum_{n=0}^l \EE\left[\mu_n \|A x_n^\d - y^\d\|^2\right]
\le \EE[\|x_0^\d - x^\dag\|^2] = \|x_0 - x^\dag\|^2<\infty. 
\end{align}
Let $\chi_{\mathcal E}$ denote the characteristic function of ${\mathcal E}$, i.e. $\chi_{\mathcal E}(\omega) =1$
if $\omega \in {\mathcal E}$ and $0$ otherwise. Then 
\begin{align*}
\EE\left[\mu_n \|A x_n^\d - y^\d\|^2\right]
\ge \EE\left[\mu_n \|A x_n^\d - y^\d\|^2\chi_{\mathcal E}\right] 
\ge \tau^2 \d^2 \EE[\chi_{\mathcal E}]
= \tau^2 \d^2 \P({\mathcal E}).
\end{align*}
Combining this with (\ref{RBCD.DP2}) gives 
$$
c_1 \tau^2 \d^2 (l+1) \P({\mathcal E}) \le \|x_0 - x^\dag\|^2
$$
for all $l \ge 0$ and hence $\P({\mathcal E}) \le \|x_0-x^\dag\|^2/(c_1 \tau^2 \d^2 (l+1)) \to 0$ 
as $l \to \infty$. Thus $\P({\mathcal E}) =0$ and the proof is complete. 
\end{proof}

Proposition \ref{prop:DP} demonstrates that along any sample path from an event with probability one
there always exists a finite integer $n_\d$ such that 
$$
\| A x_{n_\d}^\d - y^\d \| \le \tau \d < \|A x_n^\d - y^\d\|, \quad 0\le n < n_\d,
$$ 
i.e. the discrepancy principle terminates the SVRG method almost surely, provided $\tau$, $\gamma_0$ and 
$\gamma_1$ are chosen properly. In Section \ref{sect6} we will provide various numerical results to test 
the performance of the discrepancy principle when it is used to terminate the SVRG method. 

\section{\bf Numerical simulations}\label{sect6}

In this section, we provide numerical simulations to test the performance of the SVRG method. All the computations are performed on the linear ill-posed system 
\begin{equation}
   A_{i}x:= \int_{a}^{b}K(s_{i},t)x(t)dt = y(s_{i}),  \quad i = 1,\cdots, N
   \label{Fredholm}
\end{equation} 
derived from the Fredholm integral equation of the first kind on $[c,d]$ by sampling at $s_{i} \in [c,d]$ with $i = 1,\cdots, N$, where the kernel $K(s,t)$ is continuous on $[c,d] \times [a,b]$ and $s_{i} = (i-0.5)(d-c)/N$ for $i = 1,\cdots, N$. We employ the three model problems, called \texttt{phillips}, \texttt{gravity} and \texttt{shaw}, which are described in \cite{H2007}. The first one is mildly ill-posed and the last two are severely ill-posed.  The brief information on these three model problems is given below. 

\begin{example}[\texttt{phillips}]\label{exmple1}
This test problem is obtained by discretizing the Fredholm integral equation 
$y(s) = \int_{-6}^{6}K(s,t)x(t)dt$, $s\in [-6, 6]$, where the kernel and the sought solution are given by 
$K(s,t) = \rho(s-t)$ and $x^\dag(t) = \rho(t)$ with 
\begin{equation*}
%\begin{split}
%& y(s) = \left ( 6-|s| \right )\left ( 1+\frac{1}{2}\cos\frac{\pi s}{3} \right )+\frac{9}{2 \pi}\sin\left (\frac{ \pi|s|}{3} \right )\\
\rho(t) = \left\{\begin{matrix}
		    1+\cos(\frac{\pi t}{3}), & |t|<3,\\ 
		    0, & |t|\geq 3.
	        \end{matrix}\right.
%\end{split}
\end{equation*}
   
\end{example}

\begin{example}[\texttt{gravity}]\label{exmple2}
This test problem follows from the discretization of a one-dimensional model problem in gravity surveying 
$y(s) = \int_{0}^{1}K(s,t)x(t)dt$,  $s\in [0, 1]$ which aims to recover a mass distribution $x(t)$ located 
at depth $d$ from the measured vertical component of the gravity field $y(s)$ at the surface. The kernel and 
the sought solution are 
   \begin{equation*}
       \begin{split}
           K(s,t) = d\left [ d^{2}+(s-t)^{2}\right ]^{-\frac{3}{2}}, \quad 
           x^\dag(t) = \sin(\pi t)+\frac{1}{2}\sin(2\pi t).
       \end{split}
   \end{equation*}
We use $d = 0.25$ in our computation. 
\end{example}

\begin{example}[\texttt{shaw}]\label{exmple3}
This one-dimensional image restoration model uses $y(s) = \int_{-\pi/2}^{\pi/2}K(s,t)x(t)dt$, $s\in [-\pi/2, \pi/2]$, 
where the kernel and the sought solution are 
   \begin{equation*}
       \begin{split}
           & K(s,t) = \left ( \cos(s)+\cos(t) \right )^{2}\left ( \frac{\sin(u)}{u} \right )^{2}, 
           \quad u=\pi\left ( \sin(s)+\sin(t) \right ),\\
           & x^\dag(t) = 2\exp\left ( - 6\left ( t-0.8 \right )^{2} \right )+\exp\left ( -2\left ( t+0.5 \right )^{2} \right ).
       \end{split}
   \end{equation*}
\end{example}

In the following we test the performance of Algorithm \ref{alg:SVRG} and Algorithm \ref{alg:SVRG-DP} by 
considering these three model examples. Instead of the exact data $y :=(y_1, \cdots, y_N)$ with $y_i := A_i x^\dag$
for each $i$, we use the noisy data $y^\d = (y_1^\d, \cdots, y_N^\d)$ generated by
\begin{equation}
    y^\d_{i} = y_{i} + \delta_{rel} |y_{i}|\epsilon_{i}, \quad i = 1,\cdots, N,
    \label{NoiseData1}
\end{equation}
where $\delta_{rel}$ is the relative noise level and $\epsilon_{i}$, $i = 1,\cdots,N$, are standard Gaussian noise. 
The integrals involved in the computation are approximated by the midpoint rule based on the partition of $[a,b]$ 
into $M:=N$ subintervals of equal length. All the simulations are performed on a Mac Air with Apple M1 processors, 
8GB DDR4 RAM, and a 512GB SSD using MATLAB R2022a.
%The parameters of $L:= \max\{||A_{i}||^{2}: i = 1,\cdots,N \}$ and the spectral norm $||A||$ for three models are provided in Table~\ref{datasets}.

%\begin{table}
%\renewcommand\arraystretch{1}
%\setlength{\tabcolsep}{2pt}
%\small
%\caption{Model and parameters.}
%\label{datasets}
%\centerline{\begin{tabular}{cccccccc}
%\hline
%\multirow{2.5}{*}{$N$}  & \multicolumn{2}{c}{Phillips} & \multicolumn{2}{c}{Gravity}  & \multicolumn{2}{c}{Shaw}  \\
%\cmidrule(r){2-3} \cmidrule(r){4-5} \cmidrule(r){6-7} 
% & $L$ & $||A||$  & $L$ & $||A||$  & $L$ & $||A||$ \\
%\midrule
%1000  & 0.1080 & 5.8029 & 0.0749 & 6.4593 & 0.0323 & 2.9933 \\
%5000  & 0.0216 & 5.8029 & 0.0150 & 6.4593 & 0.0065 & 2.9933 \\
%10000 & 0.0108 & 5.8029 & 0.0075 & 6.4593 & 0.0032 & 2.9933 \\
%\hline
%\end{tabular}}
%\end{table}

In the computed examples, we utilize the noisy data $y^{\delta}$ with three different relative noise levels $\delta_{rel} = 10^{-1},10^{-2}$ and $10^{-3}$ and execute the SVRG method with the initial guess $x_{0} = 0$ together with the step-sizes given by (\ref{step-size}) in Remark~\ref{remark_stepsize}. 
%The maximum number of epochs is fixed at $10^{5}$, where one epoch refers to $m$ SVRG iterations so that the computational complexity of each method is comparable. 
In order to have fair judgement on the performance of the method, all statistical quantities presented below are computed from 100 runs. 

\begin{figure}[h]
\centering
\subfigure{
\includegraphics[width=4.35cm]{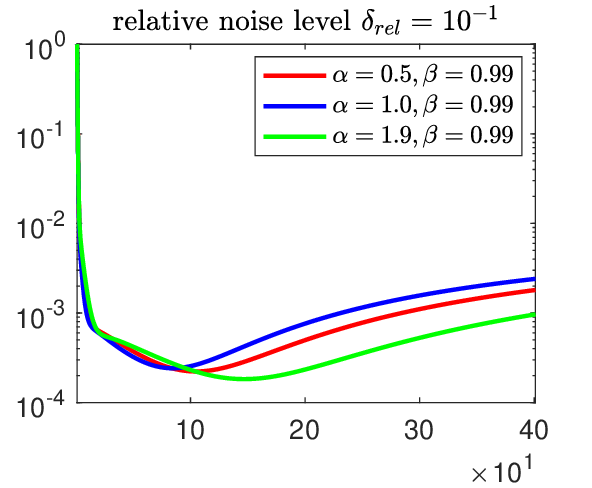}
}
\hspace{-7.8mm}
\vspace{-4mm}
\subfigure{
\includegraphics[width=4.35cm]{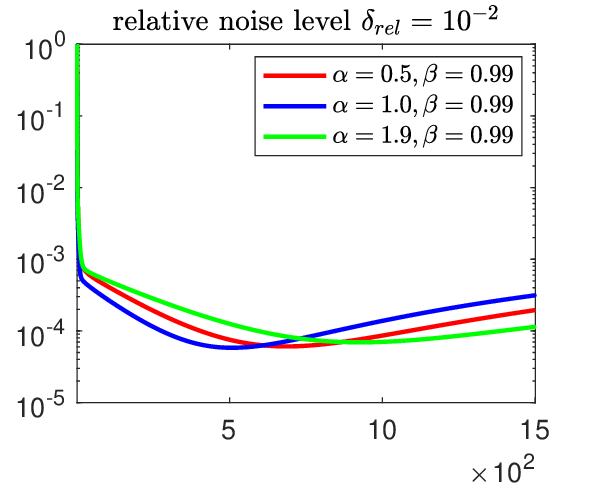}
}
\hspace{-7.8mm}
\subfigure{
\includegraphics[width=4.3cm]{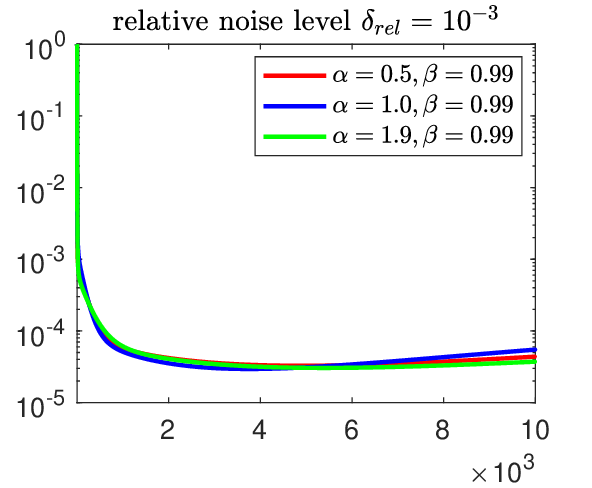}
}
\quad
\subfigure{
\includegraphics[width=4.35cm]{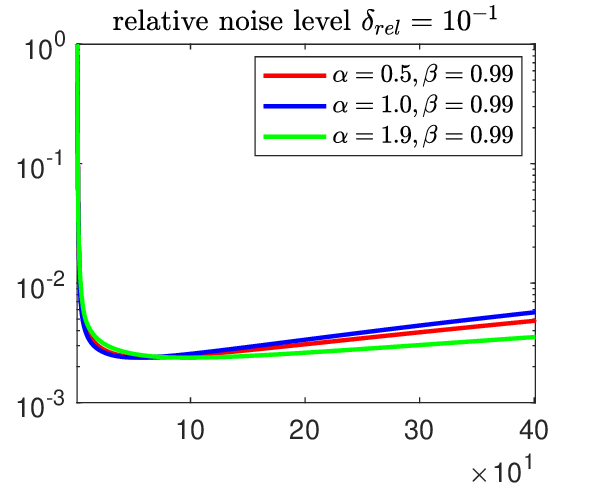}
}
\hspace{-7.8mm}
\vspace{-4mm}
\subfigure{
\includegraphics[width=4.35cm]{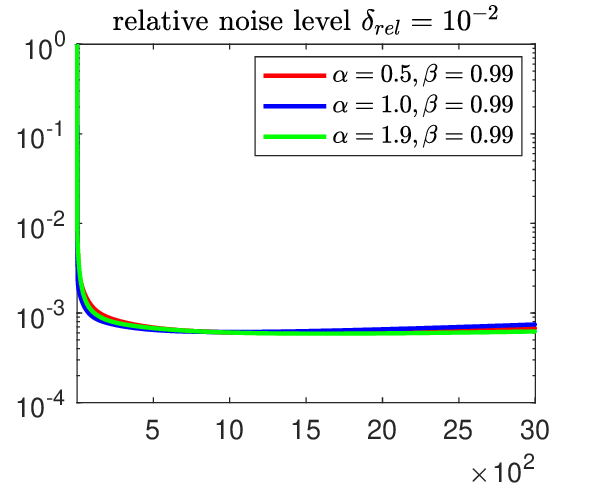}
}
\hspace{-7.8mm}
\subfigure{
\includegraphics[width=4.35cm]{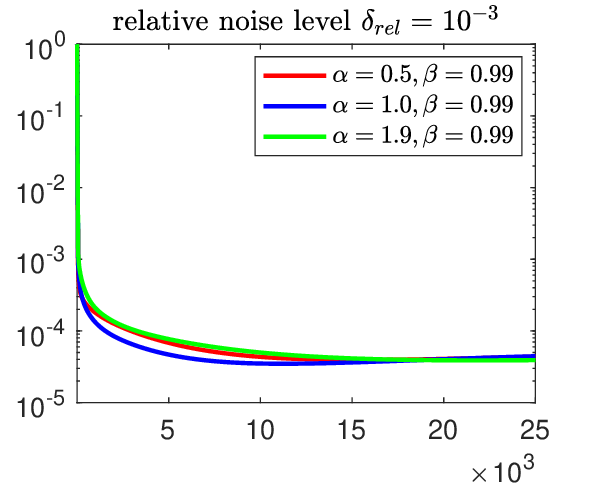}
}
\quad
\subfigure{
\includegraphics[width=4.35cm]{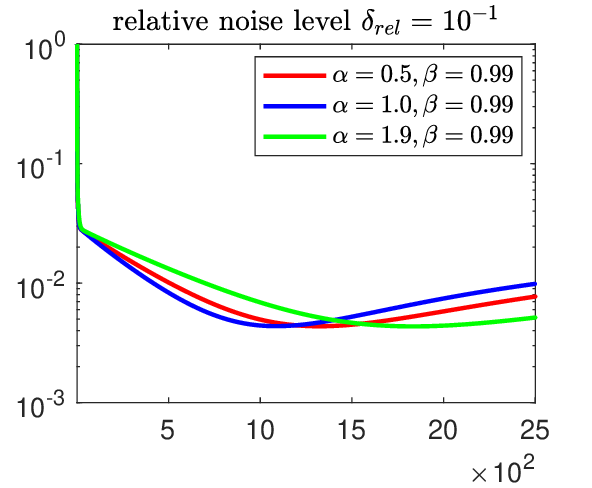}
}
\hspace{-7.8mm}
\subfigure{
\includegraphics[width=4.35cm]{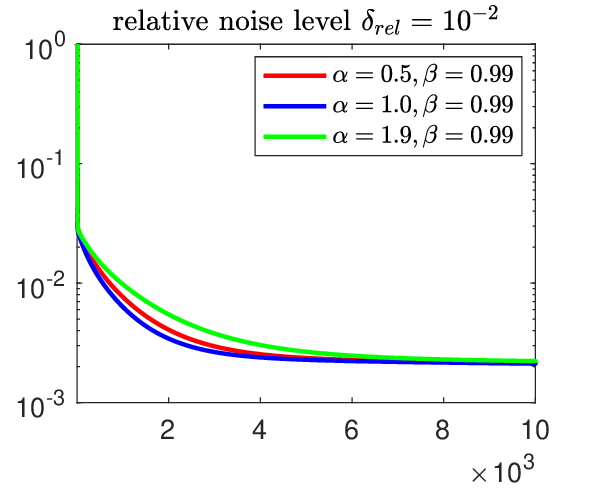}
}
\hspace{-7.8mm}
\subfigure{
\includegraphics[width=4.35cm]{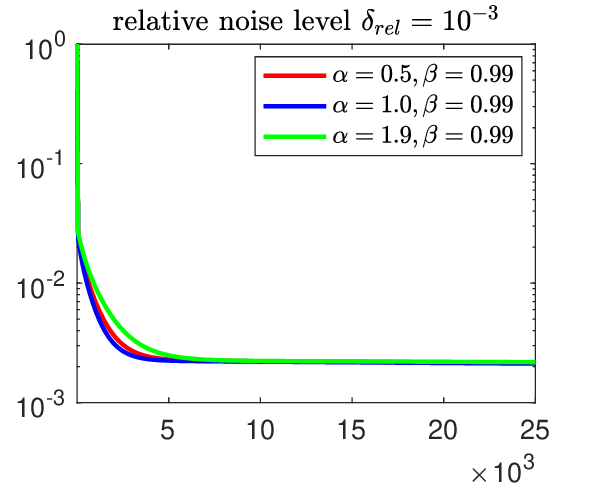}
}
\caption{Reconstruction error of SVRG using various parameters of $\alpha, \beta$ and the relative noise level $\delta_{rel}$. The rows from top to bottom refer to \texttt{phillips}, \texttt{gravity} and \texttt{shaw}, respectively.}
\label{ReMse}
\end{figure}

We first test the performance of Algorithm \ref{alg:SVRG} by considering the system \eqref{Fredholm} with $N=5000$. For a given discrete model, the step-sizes depend on the update frequency $m$, $\alpha$ and $\beta$. We use $m = 0.1N$. To illustrate the dependence of convergence on the magnitude of step-size, we consider the three groups of values: $(\alpha, \beta) = (0.5, 0.99), (1.0, 0.99)$ and $(1.9, 0.99)$. Figure \ref{ReMse} depicts the corresponding relative mean square errors $\mathbb{E}\left [ ||x_{n}^{\delta}-x^{\dagger}||^{2}/||x^{\dagger}||^{2}\right ]$ of reconstructions for the three model examples, where $n$ represent the number of epochs. These numerical plots demonstrate that the SVRG method exhibits the semi-convergence phenomenon, i.e., the iterate converges to the sought solution at the beginning and then starts to diverge after a critical number of iterations. Furthermore, the semi-convergence occurs earlier when the step-size is chosen by \eqref{step-size} with $(\alpha, \beta) = (1.0, 0.99)$ which means this choice of $(\alpha, \beta)$ allows the iterates to rapidly produce a reconstruction result with minimal error, but also quickly diverge from the sought solution. The semi-convergence behavior poses a challenge in determining how to terminate the iteration to produce satisfactory reconstruction results. It is therefore necessary 
to consider {\it a posteriori} stopping rules. 

\begin{table}[ht]
\caption{Numerical results for \texttt{phillips} model by SVRG, i.e. Algorithm \ref{alg:SVRG-DP} with 
$\gamma_0$ and $\gamma_1$ chosen by (\ref{step-size}) using $\a = 1$ and $\beta = 0.99$, 
and Landwber method (\ref{LM}) with $\gamma = 1/\|A\|^2$ terminated by the discrepancy principle with $\tau = 1.01$.}
\label{Phillips_DP}
\begin{center}
\begin{tabular}{llllll}
\hline
$N$ & $\d_{rel}$  & method     &  \texttt{iteration}  & \texttt{time} (s) &  \texttt{relative error}      \\
\hline
1000 & 0.1       & Landweber           &  19        &  0.0101    & 4.1590e-03  \\
~    & ~         & SVRG: $m = N$       &  2.72      &  0.0156    & 2.4393e-03 \\
~    & ~         & SVRG: $m = 0.1 N$   &  5.37      &  0.0080    & 3.4368e-03  \\
~    & 0.01      & Landweber           &  102       &  0.0385    & 7.9908e-04 \\
~    & ~         & SVRG: $m = N$       &  9.14      &  0.0525    & 1.1483e-03 \\
~    & ~         & SVRG: $m = 0.1 N$   &  22.21     &  0.0277    & 1.0987e-03  \\
~    & 0.001     & Landweber           &  3059      &  1.1237    & 9.6454e-05 \\
~    & ~         & SVRG: $m = N$       &  245.77    &  1.2181    & 1.1943e-04   \\
~    & ~         & SVRG: $m = 0.1 N$   &  638.62    &  0.6945    & 1.1686e-04  \\
 \hline
5000  & 0.1      & Landweber           &  16        &  0.1570    & 5.9102e-03  \\
~     & ~        & SVRG: $m = N$       &  2.03      &  0.4533    & 1.9841e-03  \\
~     & ~        & SVRG: $m = 0.1 N$   &  3.11      &  0.1041    & 3.9575e-03 \\
~     & 0.01     & Landweber           &  114       &  1.0965    & 6.5804e-04 \\
~     & ~        & SVRG: $m = N$       &  5.28      &  1.1646    & 9.2306e-04 \\
~     & ~        & SVRG: $m = 0.1 N$   &  13.41     &  0.4330    & 9.6879e-04  \\
~     & 0.001    & Landweber           &  2690      &  25.294    & 1.5558e-04  \\
~     & ~        & SVRG: $m = N$       &  93.57     &  20.208    & 1.6958e-04 \\
~     & ~        & SVRG: $m = 0.1 N$   &  283.17    &  9.1490    & 1.6979e-04  \\
\hline
10000 & 0.1      & Landweber           &  16        &  0.6462    & 6.3237e-03  \\
~     & ~        & SVRG: $m = N$       &  2.01      &  2.4825    & 1.7839e-03  \\
~     & ~        & SVRG: $m = 0.1 N$   &  2.7       &  0.4334    & 3.6413e-03  \\
~     & 0.01     & Landweber           &  116       &  3.9144    & 6.3945e-04  \\
~     & ~        & SVRG: $m = N$       &  3.85      &  4.6586    & 7.9873e-04  \\
~     & ~        & SVRG: $m = 0.1 N$   &  10.41     &  1.6302    & 8.7121e-04  \\
~     & 0.001    & Landweber           &  3449      &  116.48    & 9.0028e-05  \\
~     & ~        & SVRG: $m = N$       &  95.05     &  111.98    & 1.1096e-04  \\
~     & ~        & SVRG: $m = 0.1 N$   &  268.72    &  44.136    & 1.0459e-04  \\
\hline
\end{tabular}\\[5mm]
\end{center}
\end{table}%%

Next we assume that the information on the noise level $\d:=\|y^\d - y\|$ is available and consider the SVRG method terminated by the discrepancy principle as described in Algorithm \ref{alg:SVRG-DP}. We demonstrate the numerical performance of Algorithm~\ref{alg:SVRG-DP} on the three model problems with $\gamma_0$ and $\gamma_1$ chosen by (\ref{step-size}) with $\alpha = 1.0, \beta = 0.99$. In this study, we employ the Landweber method as our benchmark. For the comparison, the Landweber method (\ref{LM}) is initialized with $x_{0}=0$ with the constant step-size $\gamma = 1/||A||^{2}$. The both methods are terminated by the discrepancy principle with $\tau = 1.01$. 

\begin{table}[ht]
\caption{Numerical results for \texttt{gravity} model by SVRG, i.e. Algorithm \ref{alg:SVRG-DP} with 
$\gamma_0$ and $\gamma_1$ chosen by (\ref{step-size}) using $\a = 1$ and $\beta = 0.99$, 
and Landwber method (\ref{LM}) with $\gamma = 1/\|A\|^2$ terminated by the discrepancy principle with $\tau = 1.01$.}
\label{Gravity_DP}
\begin{center}
\begin{tabular}{llllll}
\hline
$N$ & $\d_{rel}$  & method     &  \texttt{iteration}  & \texttt{time} (s) &  \texttt{relative error}              \\
\hline
1000 & 0.1       & Landweber          &  23        &  0.0091    &   6.8214e-03  \\
~    & ~         & SVRG: $m = N$      &  2.52      &  0.0136    &   6.2835e-03  \\
~    & ~         & SVRG: $m = 0.1 N$  &  4.54      &  0.0055    &   7.5344e-03  \\
~    & 0.01      & Landweber          &  178       &  0.0618    &   2.0434e-03  \\
~    & ~         & SVRG: $m = N$      &  12.72     &  0.0625    &   2.0389e-03  \\
~    & ~         & SVRG: $m = 0.1 N$  &  34.03     &  0.0388    &   2.0621e-03  \\
~    & 0.001     & Landweber          &  3774      &  1.0686    &   3.1782e-04  \\
~    & ~         & SVRG: $m = N$      &  208.28    &  0.9407    &   3.1532e-04  \\
~    & ~         & SVRG: $m = 0.1 N$  &  649.56    &  0.7182    &   3.2604e-04  \\
\hline
5000 & 0.1       & Landweber          & 22         &  0.2075    &   7.7204e-03  \\
~    & ~         & SVRG: $m = N$      & 1.98       &  0.4385    &   5.6416e-03  \\
~    & ~         & SVRG: $m = 0.1 N$  & 2.89       &  0.0938    &   7.4545e-03  \\
~    & 0.01      & Landweber          & 249        &  2.2940    &   1.5475e-03  \\
~    & ~         & SVRG: $m = N$      & 8.71       &  1.9899    &   1.5585e-03  \\
~    & ~         & SVRG: $m = 0.1 N$  & 24.2       &  0.7469    &   1.6239e-03  \\
~    & 0.001     & Landweber          & 4588       &  42.369    &   2.7350e-04  \\
~    & ~         & SVRG: $m = N$      & 120.48     &  27.256    &   2.7895e-04  \\
~    & ~         & SVRG: $m = 0.1 N$  & 377.31     &  12.120    &   2.7368e-04  \\
\hline
10000 & 0.1      & Landweber          & 22         &  0.7390    &   8.0617e-03  \\
~    & ~         & SVRG: $m = N$      & 1.97       &  2.2943    &   4.6782e-03  \\
~    & ~         & SVRG: $m = 0.1 N$  & 2.38       &  0.3695    &   6.5503e-03  \\
~    & 0.01      & Landweber          & 275        &  9.0302    &   1.3574e-03  \\
~    & ~         & SVRG: $m = N$      & 6.53       &  7.6368    &   1.4864e-03  \\
~    & ~         & SVRG: $m = 0.1 N$  & 17.73      &  2.6020    &   1.4652e-03  \\
~    & 0.001     & Landweber          & 4614       &  153.75    &   2.7504e-04  \\
~    & ~         & SVRG: $m = N$      & 89.83      &  107.45    &   2.7817e-04  \\
~    & ~         & SVRG: $m = 0.1 N$  & 288.95     &  43.520    &   2.7620e-04  \\
\hline
\end{tabular}\\[5mm]
\end{center}
\end{table}

The SVRG algorithm involves a hyperparameter, the update frequency $m$ of evaluating the full gradient, which is a key parameter for the performance and efficiency of the method. To assess the impact of the hyperparameter $m$ at different scales, we conduct a series of numerical experiments with $m=N$ and $m=0.1N$ for three different discretization levels $N= 1000, 5000, 10000$. The numerical results for the three model problems are reported in Table \ref{Phillips_DP}, Table \ref{Gravity_DP} and Table \ref{Shaw_DP}. In these tables, ``\texttt{iteration}" represents the stopping index output by the discrepancy principle, and ``\texttt{time}" and ``\texttt{relative error}" report the corresponding execution time and the relative error at the output stopping index; for Algorithm \ref{alg:SVRG-DP} these quantities are calculated as the averages of 100 independent runs.  

%\begin{itemize}
%    \item For the SVRG method, the ``iteration'' represents the mean stopping index $n = \mathbb{E}[n_{\delta}]$, in term of the number of epochs;  For the Landweber method, ``iteration'' means the stopping index of the iteration, not the number of epochs. 
%    \item ``time'' is the execution time (in seconds).
%    \item ``error'' is the mean of the relative error $||x_{n_{\delta}}^{\delta}-x^{\dagger}||^{2}/||x^{\dagger}||^{2}$, i.e., 
%    \begin{equation}
%	\varepsilon = \mathbb{E}[||x_{n_{\delta}}^{\delta} - 
%        x^{\dagger}||^{2}/||x^{\dagger}||^{2}].
%	\label{MSE}
%    \end{equation}
%    Here, $n_{\delta}=\min\{n\in\mathbb{N}: ||Ax_{n}^{\delta}-y^{\delta}||\leq \tau \delta\}$. In the following discussion, $\varepsilon_{\text{svrg}}$ and $\varepsilon_{\text{lm}}$ represent the reconstruction errors of the SVRG and Landweber methods, respectively.
%\end{itemize}

\begin{table}[ht]
\caption{Numerical results for \texttt{shaw} model by SVRG, i.e. Algorithm \ref{alg:SVRG-DP} with 
$\gamma_0$ and $\gamma_1$ chosen by (\ref{step-size}) using $\a = 1$ and $\beta = 0.99$, 
and Landwber method (\ref{LM}) with $\gamma = 1/\|A\|^2$ terminated by the discrepancy principle with $\tau = 1.01$.}
\label{Shaw_DP}    
\begin{center}
\begin{tabular}{llllll}
\hline
$N$ & $\d_{rel}$  & method     &  \texttt{iteration}  & \texttt{time} (s) &  \texttt{relative error}              \\
\hline
1000 & 0.1       & Landweber          & 56      &   0.0183      & 3.3729e-02  \\
~    & ~         & SVRG: $m = N$      & 4.82    &   0.0242      & 3.2753e-02  \\
~    & ~         & SVRG: $m = 0.1 N$  & 11.94   &   0.0136      & 3.3493e-02  \\  
~    & 0.01      & Landweber          & 1732    &   0.5525      & 1.8242e-02  \\
~    & ~         & SVRG: $m = N$      & 137.64  &   0.6567      & 1.8157e-02  \\
~    & ~         & SVRG: $m = 0.1 N$  & 369.43  &   0.4109      & 1.8258e-02  \\
~    & 0.001     & Landweber          & 27018   &   6.9767      & 2.5595e-03  \\
~    & ~         & SVRG: $m = N$      & 2134.6  &   9.5064      & 2.5599e-03  \\
~    & ~         & SVRG: $m = 0.1 N$  & 5761.6  &   6.3493      & 2.5602e-03  \\
\hline
5000 & 0.1       & Landweber          & 57      &   0.5280      & 3.5610e-02  \\
~    & ~         & SVRG: $m = N$      & 2.75    &   0.6069      & 3.2465e-02  \\
~    & ~         & SVRG: $m = 0.1 N$  & 6.53    &   0.2047      & 3.4849e-02  \\
~    & 0.01      & Landweber          & 2743    &   25.450      & 1.4948e-02  \\
~    & ~         & SVRG: $m = N$      & 102.45  &   22.853      & 1.4832e-02  \\
~    & ~         & SVRG: $m = 0.1 N$  & 299.36  &   9.1962      & 1.4919e-02  \\
~    & 0.001     & Landweber          & 29136   &   283.34      & 2.4354e-03  \\
~    & ~         & SVRG: $m = N$      & 1077.3  &   240.26      & 2.4347e-03  \\
~    & ~         & SVRG: $m = 0.1 N$  & 3152.11 &   100.75      & 2.4353e-03  \\
\hline
10000 & 0.1      & Landweber          & 58      &   1.9163      & 3.4329e-02  \\
~    & ~         & SVRG: $m = N$      & 2.13    &   2.4791      & 3.0960e-02  \\
~    & ~         & SVRG: $m = 0.1 N$  & 5.02    &   0.7598      & 3.3105e-02  \\
~    & 0.01      & Landweber          & 3185    &   109.54      & 1.3195e-02  \\
~    & ~         & SVRG: $m = N$      & 84.47   &   100.94      & 1.3165e-02  \\
~    & ~         & SVRG: $m = 0.1 N$  & 252.86  &   41.515      & 1.3138e-02  \\
~    & 0.001     & Landweber          & 29962   &   1048.3      & 2.4488e-03  \\
~    & ~         & SVRG: $m = N$      & 792.24  &   924.37      & 2.4479e-03  \\
~    & ~         & SVRG: $m = 0.1 N$  & 2366.75 &   371.18      & 2.4488e-03  \\
\hline
\end{tabular}\\[5mm]
\end{center}
\end{table}

The numerical results reveal several noteworthy observations. First, the results demonstrate that Algorithm \ref{alg:SVRG-DP} can be terminated after finite number of iterations and produces acceptable approximate solutions. 
%This observation aligns with the theoretical analysis discussed in Section \ref{sect5}. 
Meanwhile, the relative error consistently decreases steadily as the noise level $\delta$ decreases, exhibiting the convergence behavior of the proposed method.
In terms of accuracy (measured by the relative mean squared error), SVRG is competitive with the classical Landweber method for the three model problems. In most cases, the corresponding relative errors for the both methods are fairly close, and  occasionally the relative error of the SVRG method can be even smaller than Landweber method. 
%This further confirms the convergence stated in Theorem \ref{svrg.thm} and Theorem \ref{SVRG.thm4}. 
These observations are valid for all the examples, despite their dramatic difference in degree of ill-posedness and solution smoothness. 

Note that each epoch in Algorithm \ref{alg:SVRG-DP} consists of a one-step of Landweber iteration which has complexity $O(N M)$ and an inner loop with $m$ iterations which has complexity $O(mM)$. Thus, the total complexity for each epoch of Algorithm \ref{alg:SVRG-DP} is $O((N+m) M)$. 
%When performing a single outer loop, the computational complexity of the SVRG algorithm for computing the full gradient is $\mathcal{O}(NM)$, while the total complexity of its inner loop is $\mathcal{O}(2mM)$. 
Consequently, if the algorithm is executed $n$ outer loops, the total computational complexity is $\mathcal{O}(n(N+m)M)$. Specifically, when $m=N$, one epoch in Algorithm \ref{alg:SVRG-DP} is equivalent to executing $2$ iterations of the Landweber method; and when $m=0.1N$, it is equivalent to executing $1.1$ times of Landweber steps. Therefore, from the perspective of computational complexity, the SVRG method is much more efficient than the Landweber method. For instance, in the gravity model with $\delta_{rel}=10^{-3}, N=10000, m =0.1N$,  the SVRG method executes $288.95 \times 1.1 \approx 318$ iterations of the Landweber method. This is only about $1/14.5$ of the 4614 iterations required by the Landweber method. However, since MATLAB optimizes matrix operations efficiently internally, directly using matrix operations is usually much more efficient than using loop iteration of matrix elements for calculation. Therefore, in the case of a small sample size ($N=1000$), although the theoretical computational complexity of the SVRG method is much lower than that of the Landweber method, the execution time of the algorithm is slightly higher. Even so, when handling large-scale problems ($N=5000, 10000$), the SVRG method exhibits significant performance advantages over the traditional Landweber method, as traditional methods may require more iterations to achieve the same convergence standards in such scenarios. 

Meanwhile, from the perspective of execution time results, we notice that the hyperparameter $m$ has a significant impact on the overall efficiency of the SVRG algorithm.  In particular, a larger $m$ value means that more number of inner iterations need to be performed in each loop, which directly leads to a significant increase in the computational cost required for each epoch, especially when processing large-scale problems. 
In addition, too large $m$ makes little variance reduction at the final stage of each inner iteration part because the iterates could be far away the snapshot point, which is not favorable to the overall performance of the algorithm. 
By analyzing the experimental results of the three different models under the same relative noise levels $\delta_{rel}$ and discretization levels, we found that, compared to $m = N$, setting $m=0.1N$ significantly reduces execution time, achieving two to three times faster efficiency while maintaining accuracy comparable to the traditional Landweber method. This emphasizes the effectiveness of appropriately decreasing $m$ to reduce computational costs and obtain satisfactory reconstruction results without affecting the performance of the algorithm.

To further illustrate the performance of individual samples, we present the boxplots of the relative errors and epochs with the discretization level $N= 10000$ for 100 simulations in %Figure \ref{boxplots_1000}, Figure \ref{boxplots_5000} and 
Figure \ref{boxplots_10000}. On the box, the central mark is the median, and the bottom and top edges of the box indicate the 25th and 75 percentiles, respectively; the whiskers extend to the most extreme data points the algorithm considers to be not outliers, and the outliers are plotted individually using the ``+" symbol. It is visible that the proposed method exhibits convergence. Meanwhile, we observe that the relative error $||x_{n_{\delta}}^{\delta}-x^\dag||^{2}/\|x^\dag\|^2$ increases with the relative noise level $\delta_{rel}$, and its distribution also broadens. However, the required number of epochs to fulfill the posteriori stopping indices decreases dramatically, as the relative noise level $\delta_{rel}$ increases, concurring with the preceding observation.

\begin{figure}[htbp]
\centering
\subfigure{
\includegraphics[width=3.3cm]{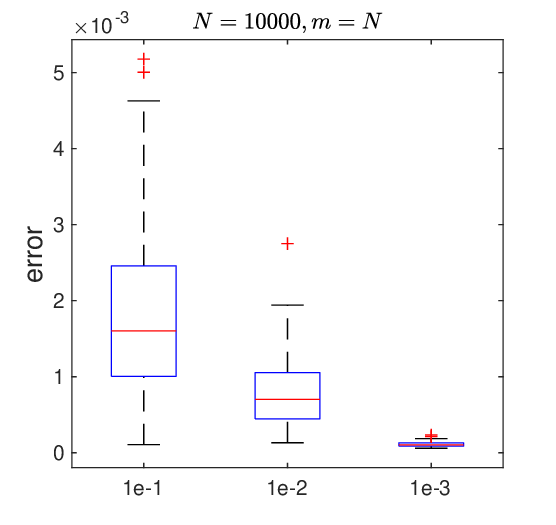}
}
\hspace{-7.2mm}
\vspace{-3mm}
\subfigure{
\includegraphics[width=3.3cm]{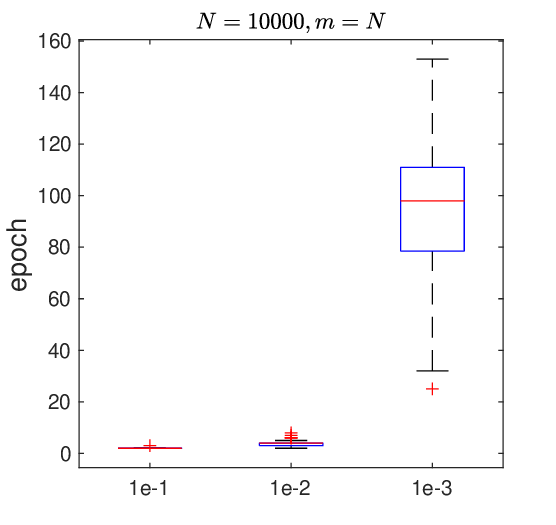}
}
\hspace{-7.2mm}
\subfigure{
\includegraphics[width=3.3cm]{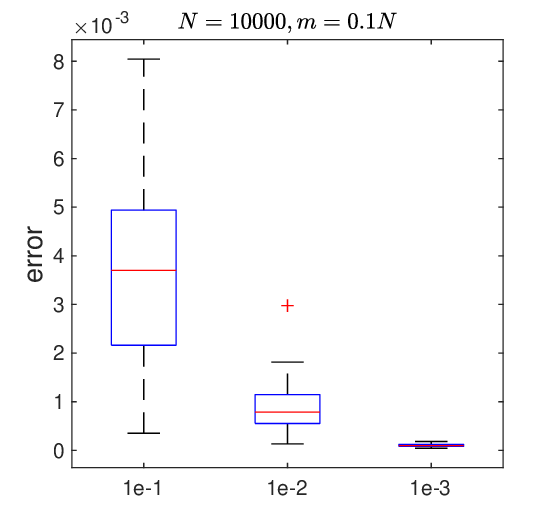}
}
\hspace{-7.2mm}
\subfigure{
\includegraphics[width=3.3cm]{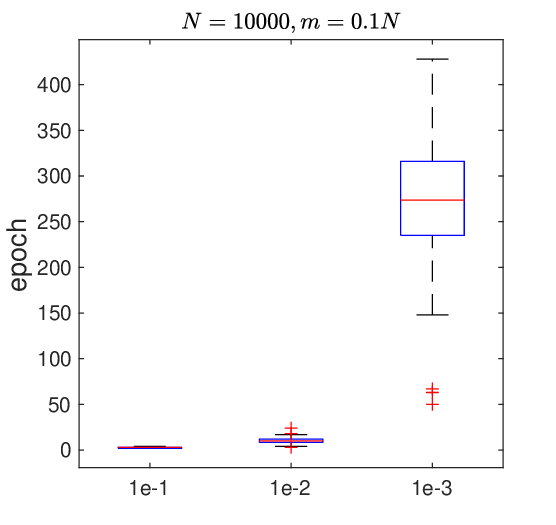}
}
\quad
\subfigure{
\includegraphics[width=3.2cm]{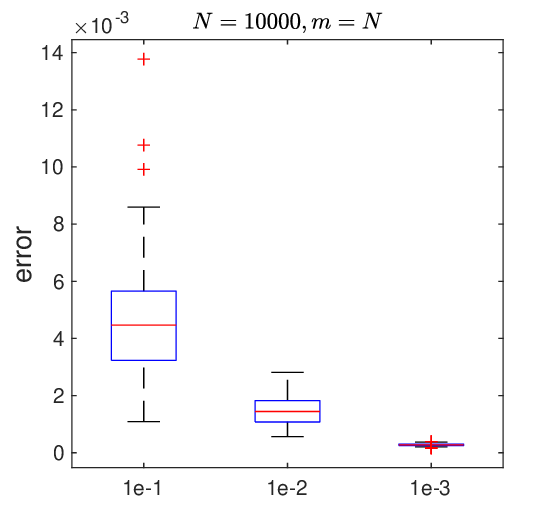}
}
\hspace{-7mm}
\vspace{-3mm}
\subfigure{
\includegraphics[width=3.2cm]{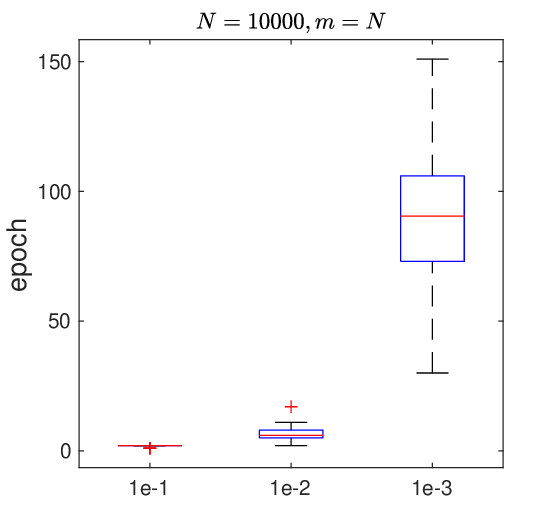}
}
\hspace{-7mm}
\subfigure{
\includegraphics[width=3.2cm]{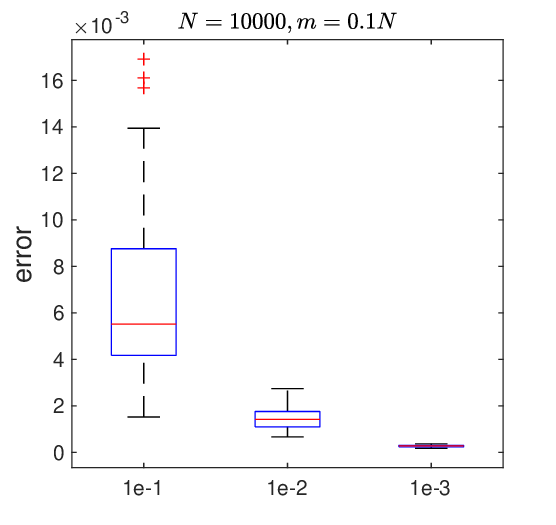}
}
\hspace{-7mm}
\subfigure{
\includegraphics[width=3.2cm]{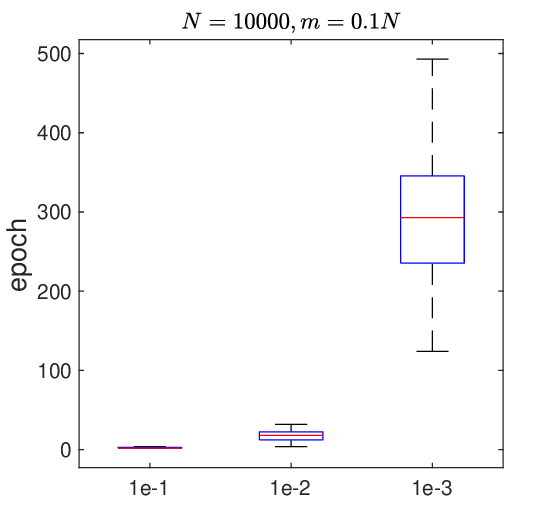}
}
\quad
\subfigure{
\includegraphics[width=3.2cm]{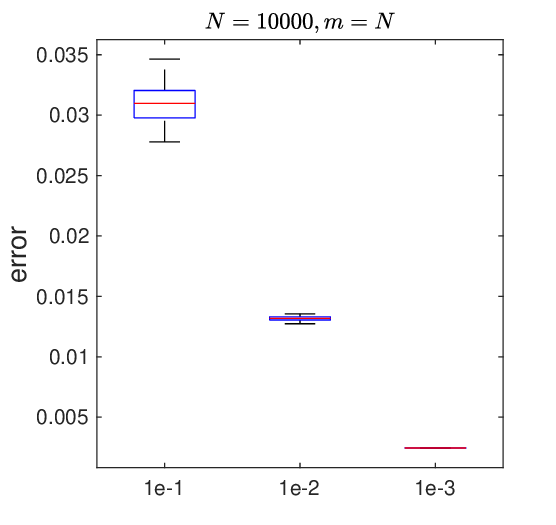}
}
\hspace{-7mm}
\subfigure{
\includegraphics[width=3.2cm]{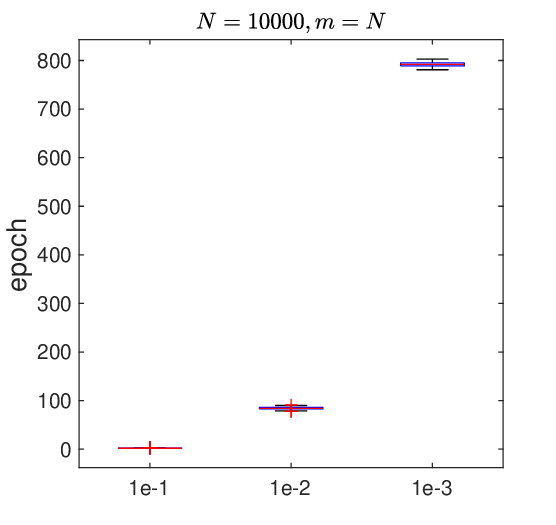}
}
\hspace{-7mm}
\subfigure{
\includegraphics[width=3.2cm]{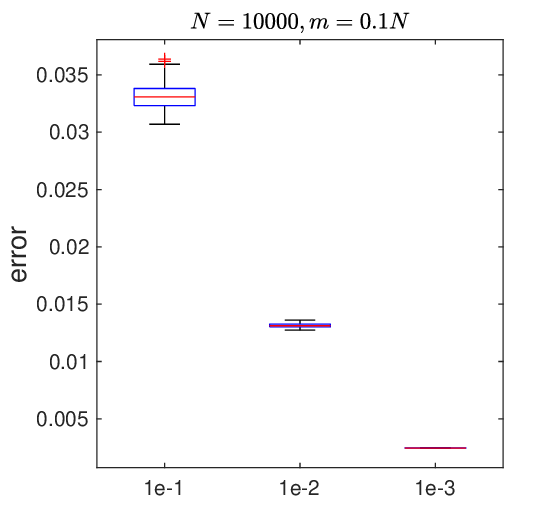}
}
\hspace{-7mm}
\subfigure{
\includegraphics[width=3.2cm]{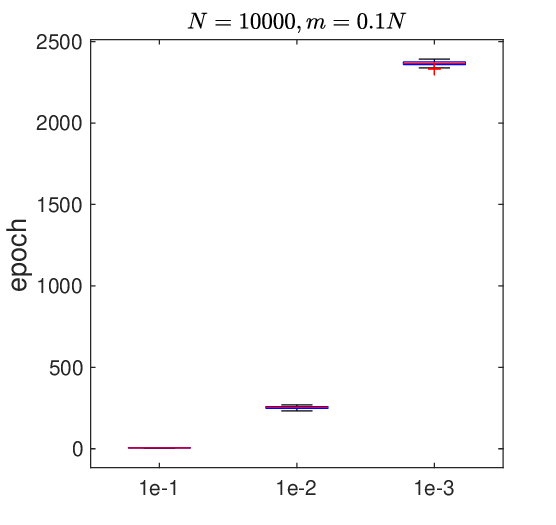}
}
\caption{Boxplots of the relative error $||x_{n_{\delta}}^{\delta}-x^{\dagger}||^{2}/||x^{\dagger}||^{2}$ 
and the stopping index $n_\d$ for the model probelms with $N=10000$. The rows from top to bottom refer 
to \texttt{phillips}, \texttt{gravity} and \texttt{shaw} respectively.}
\label{boxplots_10000}
\end{figure}
   
In order to visualize the performance, the reconstructed solutions of some individual runs are plotted in Figure \ref{reconstruction results}. All these results demonstrate that the proposed method consistently produces satisfactory reconstruction results.

\begin{figure}[ht]
\centering
  \includegraphics[width=0.35\textwidth]{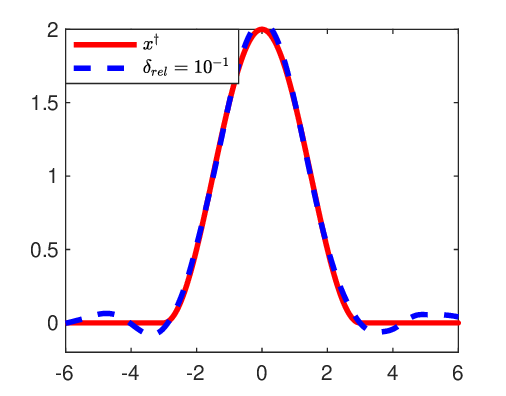}\hspace{-5.3mm}
  \includegraphics[width=0.35\textwidth]{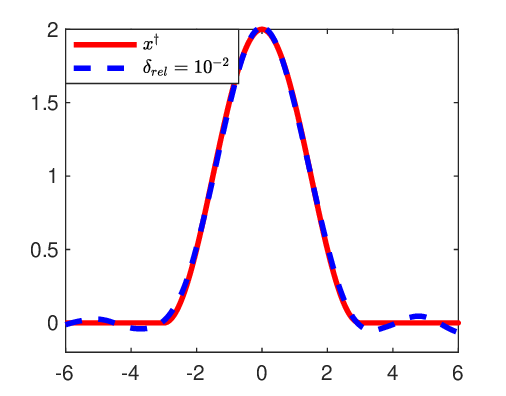}\hspace{-5.3mm}
  \includegraphics[width=0.35\textwidth]{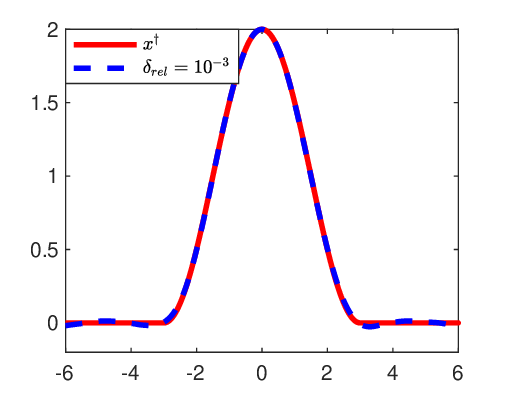}
  \includegraphics[width=0.35\textwidth]{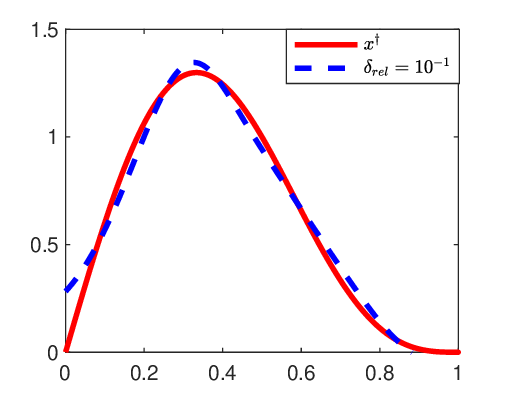}\hspace{-5.3mm}
  \includegraphics[width=0.35\textwidth]{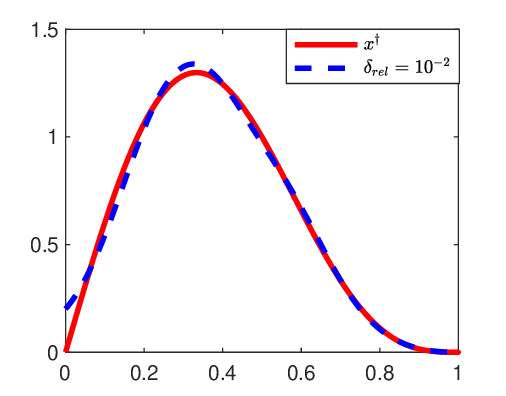}\hspace{-5.3mm}
  \includegraphics[width=0.35\textwidth]{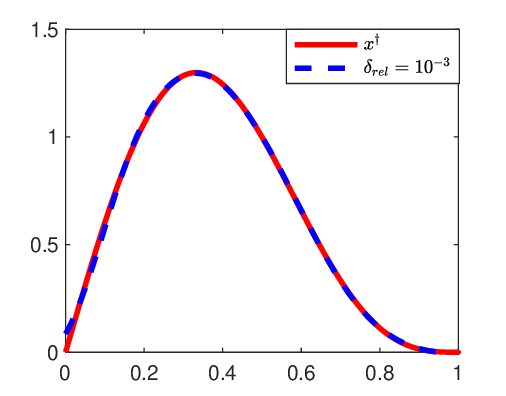}
  \includegraphics[width=0.35\textwidth]{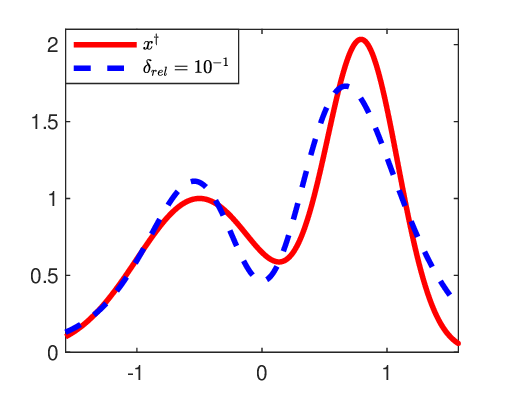}\hspace{-5.3mm}
  \includegraphics[width=0.35\textwidth]{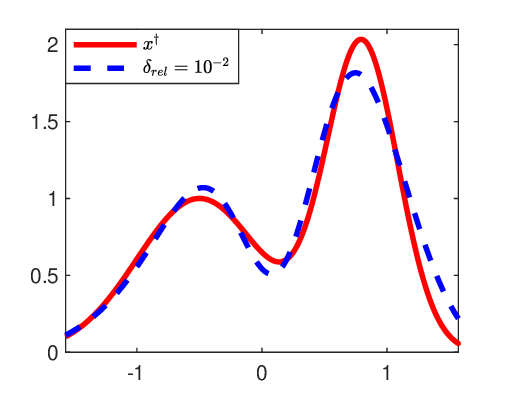}\hspace{-5.3mm}
  \includegraphics[width=0.35\textwidth]{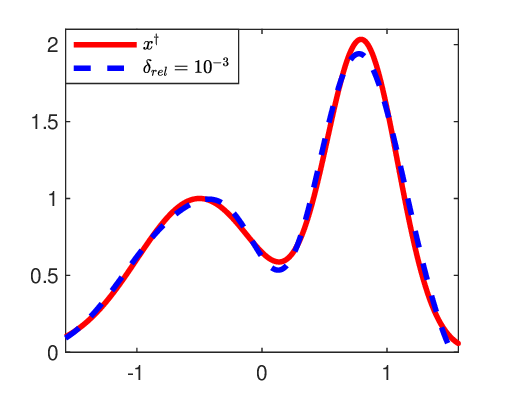}
\caption{The sought solution $x^{\dagger}$ and the reconstruction results by SVRG using noisy data 
with various relative noise levels. The rows from top to bottom refer to \texttt{phillips}, \texttt{gravity}
and \texttt{shaw} respectively.}
\label{reconstruction results}
\end{figure}

\section{\bf Conclusion}

Stochastic variance reduced gradient (SVRG) method is a prominent method for solving large scale well-posed 
optimization problems in machine learning and a variance reduction strategy has been introduced into the 
algorithm design to accelerate the stochastic gradient method. In this paper we applied the SVRG method to 
solve large scale linear ill-posed systems in Hilbert spaces. Under a benchmark source condition on the sought 
solution, we obtained a convergence rate result on the method when a stopping index is properly chosen. 
Based on this result and a perturbation argument we established a convergence result without using any 
source conditions. Furthermore, we considered the discrepancy principle to choose the stopping index and 
demonstrated that it terminates the SVRG method in finite many iteration steps almost surely. Various numerical 
results were reported which illustrate that the SVRG method can outperform the classical Landweber method
for large scale ill-posed inverse problems. There are several questions that might deserve further investigation: 

\begin{enumerate}[leftmargin = 0.8cm]
\item[$\bullet$]  In the SVRG method we used 
constant step sizes $\gamma_0$ and $\gamma_1$. Is it possible to develop a convergence theory of the 
SVRG method using adaptive step sizes so that larger step size can be allowed to reduce the number of 
iterations and hence to speed up the method? 

\item[$\bullet$] Our convergence theory for the SVRG method is for determining 
the $x_0$-minimal norm solutions. In applications, the sought solutions may have other {\it a priori}
available features, such as nonnegativity, sparsity and so on. Is it possible to modify the SVRG method 
with a solid theoretical foundation so that it can capture such desired features? 

\item[$\bullet$] Nonlinear ill-posed systems can arise from various tomography imaging problems. Can we 
extend the SVRG method to solve ill-posed systems in nonlinear setting? 
\end{enumerate}

\section*{\bf Acknowledgement}

The work of Q. Jin is partially supported by the Future Fellowship of the Australian Research Council (FT170100231).
The work of L. Chen is supported by the China Scholarship Council program (Project ID: 202106160067).


\begin{thebibliography}{10} 

\bibitem{AY2016} {\sc Z. Allen-Zhu and Y. Yuan}, {\it Improved SVRG for non-strongly-convex or 
sum-of-non-convex Objectives}, International Conference on Machine Learning, pp. 1080--1089, 2016.

\bibitem{BDP1985} M. Bertero, C.  De Mol and E. R. Pike,  {\it Linear inverse problems 
with discrete data. I: General formulation and singular system analysis}, Inverse Problems, 
l (1985),  301--330.

\bibitem{B2020} P. Br\'{e}maud, {\it Probability theory and stochastic processes}, Universitext, 
Springer, Cham, 2020. 

\bibitem{DBL2014} A. Defazio, F. Bach and S. Lacoste-Julien, {\it Saga: a fast incremental gradient 
method with support for non-strongly convex composite objectives}, Proc. 27th Int. Conf. Adv. Neural 
Inf. Process. Syst., pp. 1646--1654,  2014.

\bibitem{EHN1996} H. W. Engl, M. Hanke and A. Neubauer, {\it Regularization of Inverse Problems}, 
Dordrecht, Kluwer, 1996.

\bibitem{H2007} P. C. Hansen, {\it Regularization tools version 4.0 for Matlab 7.3},  Numer. Algorithms, 
46 (2007),  189--194. 

\bibitem{HAVSKS2015} R. Harikandeh, M. O. Ahmed, A. Virani, M. Schmidt,  J. Konecny and S. Sallinen, 
{\it Stop wasting my gradients: practical SVRG}, Proc. 28th Int. Conf. Adv. Neural Inf. Process. 
Syst.,  pp. 2251--2259, 2015.

\bibitem{JL2019}
B. Jin and X. Lu, {\it On the regularizing property of stochastic gradient descent}, Inverse Problems, 
35 (2019), no. 1, 015004, 27 pp.

\bibitem{JZZ2022} B. Jin, Z. Zhou and J. Zou, {\it An analysis of stochastic variance 
reduced gradient for linear inverse problems}, Inverse Problems, 38 (2022), no. 2, 025009, 34 pp. 

\bibitem{Jin2010} Q. Jin, {\it On a regularized Levenberg-Marquardt method for solving nonlinear inverse problems}, 
Numer. Math., 115 (2010), no. 2, 229--259. 

\bibitem{Jin2011} Q. Jin, {\it A general convergence analysis of some Newton-type methods for 
nonlinear inverse problems}, SIAM J. Numer. Anal., 49 (2011), no. 2, 549--573. 

\bibitem{JLZ2023} Q. Jin, X. Lu and L. Zhang, {\it Stochastic mirror descent method 
for linear ill-posed problems in Banach spaces}, Inverse Problems, 39 (2023), no. 6, 065010, 39 pp.

\bibitem{JZ2013} R. Johnson and T. Zhang, {\it Accelerating stochastic gradient descent using 
predictive variance reduction},  Advances in Neural Information Processing Systems, 315--323, 2013.

\bibitem{N2001} F. Natterer,{\it  The Mathematics of Computerized Tomography}, SIAM, Philadelphia, 2001.

\bibitem{NLST2017} L. M. Nguyen, J. Liu, K. Scheinberg and M. Tak\'{a}c, {\it SARAH: a novel method for 
machine learning problems using stochastic recursive gradient}, Proc. 34th Int. Conf. Machine Learning, 
PMLR, 70 (2017), 2613--2621. 

\bibitem{LSB2012} N. Le Roux, M. Schmidt and F. Bach, {\it A stochastic gradient method with an exponential 
convergence rate for strongly-convex optimization with finite training sets},  Advances in Neural 
Information Processing System, 2663--2671, 2012. 

\bibitem{SZ2013} S. Shalev-Shwartz and T. Zhang, {\it Stochastic dual coordinate as- cent methods for 
regularized loss minimization}, J. Mach. Learn. Res.,  14 (2013), 567--599. 

\bibitem{TMDQ2016} C. Tan, S. Ma, Y. H. Dai and Y. Qian, {\it Barzilai–Borwein step size for 
stochastic gradient descent},  Proc. 29th Int. Conf. Adv. Neural Inf. Process. Syst., 685--693,
2016. 

\bibitem{XZ2014} L. Xiao and T. Zhang, {\it A proximal stochastic gradient method with progressive variance
reduction}, SIAM Journal on Optimization, 24 (2014), no. 4, 2057–--2075.

\bibitem{YLDS2021} T. Yu, X. Liu, Y. H. Dai and J. Sun, {\it Stochastic variance reduced gradient methods 
using a trust-region-like scheme}, J. Sci. Comput., 87 (2021), no. 1, Paper No. 5, 24 pp. 

\bibitem{ZMJ2013} L. Zhang, M. Mahdavi and R. Jin, {\it Linear convergence with condition number 
independent access of full gradients}, Advances in Neural Information Processing Systems, 980--988, 2013. 

\bibitem{ZX2015} Y. Zhang and L. Xiao, {\it Stochastic primal-dual coordinate method for regularized 
empirical risk minimization},  Proc. Int. Conf. Mach. Learn., 2015, 353--361.
\end{thebibliography}
\end{document}